\documentclass[12pt]{article}

\usepackage[T1]{fontenc}
\usepackage[utf8]{inputenc}
\usepackage[english]{babel}

\usepackage{amsmath}
\usepackage{amssymb}
\usepackage{amsthm}

\usepackage[a4paper,margin=2.5cm]{geometry}
\usepackage{microtype}
\usepackage[hidelinks]{hyperref}

\hypersetup{
  pdftitle={Dedekind Poisson Algebras over Arbitrary Fields},
  pdfauthor={A. I. Plakosh; O. O. Pypka},
  pdfsubject={Poisson algebras whose subalgebras are ideals},
  pdfkeywords={Poisson algebra, Dedekind algebra, subalgebra, ideal, bilinear form, arbitrary field, characteristic 2}
}

\usepackage{authblk}

\newtheorem{theorem}{Theorem}[section]
\newtheorem{proposition}[theorem]{Proposition}
\newtheorem{lemma}[theorem]{Lemma}
\newtheorem{corollary}[theorem]{Corollary}

\theoremstyle{definition}
\newtheorem{definition}[theorem]{Definition}
\newtheorem{example}[theorem]{Example}

\theoremstyle{remark}
\newtheorem{remark}[theorem]{Remark}

\title{Dedekind Poisson Algebras over Arbitrary Fields}

\author[1]{A.~I.~Plakosh}
\author[2]{O.~O.~Pypka}

\affil[1]{National Academy of Sciences in Ukraine, Institute of Mathematics,
Kyiv, Ukraine}
\affil[2]{Oles Honchar Dnipro National University,
Dnipro, Ukraine}

\date{}

\begin{document}

\maketitle

\begin{abstract}
We study Poisson algebras in which every Poisson subalgebra is a Poisson ideal, called Dedekind Poisson algebras, over arbitrary fields and without any finite-dimensionality assumption. In characteristic different from \(2\), combining the two operations by $x*y=xy+[x,y]$ connects the problem with Outcalt’s classical classification of power-associative \(H\)-algebras and yields the same structural type. We obtain a complete classification by a direct two-operation argument valid in every characteristic. This approach also shows that characteristic \(2\) retains additional Poisson information which cannot be recovered from the single product \(*\).

\end{abstract}

\medskip

\noindent\textbf{Keywords and phrases.}
Poisson algebra; Dedekind algebra; subalgebra; ideal; commutative
associative algebra; bilinear form; arbitrary field; characteristic 2.

\medskip

\noindent\textbf{2020 Mathematics Subject Classification.}
Primary 17B63; Secondary 17B05, 17A60.

\medskip

\section{Introduction}
\label{sec:introduction}

Conditions requiring every subobject to be normal or ideal are among
the classical rigidity conditions of algebra. Dedekind groups, in
which every subgroup is normal, originate in Dedekind's work
\cite{Dedekind1897}; the non-abelian case was further developed by
Baer \cite{Baer1933}. Analogous questions were later studied for
rings and algebras. Liu studied alternative and Jordan algebras in
which every subalgebra is an ideal \cite{Liu1964}, Kruse considered
the corresponding condition for rings \cite{Kruse1968}, and Outcalt
subsequently obtained a classification of power-associative algebras
over fields of characteristic different from $2$ in which every
subalgebra is a two-sided ideal \cite{Outcalt1967}. A close
non-associative analogue appears in the work of Kurdachenko, Semko
and Subbotin on Leibniz algebras whose every subalgebra is an ideal
\cite{KurdachenkoSemkoSubbotin2017}.

Poisson algebras carry two operations on the same vector space: a
commutative associative multiplication and an alternating Lie
multiplication tied by the Leibniz identity.  Their structural theory
has been developed in several directions; see, for example,
\cite{FernandezNavarroTowers2024,FernandezOmirov2025,GozeRemm2008,
KurdachenkoPypkaSubbotin2021,SicilianoUsefi2021,Towers2026}. The
presence of two interacting operations makes the condition
\[
\text{every Poisson subalgebra of $P$ is a Poisson ideal of $P$}
\]
a natural Poisson version of the Dedekind problem.  We call such an
algebra a \emph{Dedekind Poisson algebra}.

There is an important classical connection which must be separated
from the genuinely two-operation part of the problem.  Suppose that
$\operatorname{char}(F)\neq2$ and define one bilinear product by
\[
x*y=xy+[x,y].
\]
Then the two Poisson operations are recovered from $*$ by
\[
xy=\frac{x*y+y*x}{2},
\qquad
[x,y]=\frac{x*y-y*x}{2}.
\]
Consequently, the $*$-subalgebras are exactly the Poisson subalgebras
and the two-sided $*$-ideals are exactly the Poisson ideals.  Moreover,
the algebra $(P,*)$ is power-associative, since the bracket vanishes on
every one-generated Poisson subalgebra.  Thus, in characteristic
different from $2$, Outcalt's theorem \cite{Outcalt1967} applies to
Dedekind Poisson algebras.

The structural form obtained from Outcalt agrees exactly with the
canonical form proved below.  Indeed, if the non-zero nil component of
the Outcalt algebra is written as
\[
u*v=\alpha(u,v)c,
\]
then
\[
\beta(u,v)=\frac{\alpha(u,v)+\alpha(v,u)}2,
\qquad
\omega(u,v)=\frac{\alpha(u,v)-\alpha(v,u)}2
\]
recover respectively the associative and Lie parts, and
$\alpha(v,v)=\beta(v,v)$.  Thus our classification in
$\operatorname{char}(F)\neq2$ is consistent with the classical
$H$-algebra classification, although the proof given here is obtained
directly from the two Poisson operations and does not use Outcalt's
theorem.

Characteristic $2$ is essentially different.  The single operation
$x*y=xy+[x,y]$ no longer determines the associative multiplication and
the Lie bracket separately.  In fact, different pairs of Poisson
operations can give the same product $*$.  Therefore the
characteristic-$2$ problem is not recovered from the classical
one-operation classification.  The main purpose of the present paper
is to give a uniform direct proof over arbitrary fields, retain the two
operations throughout, and describe explicitly the additional
characteristic-$2$ information.  To the best of our knowledge, a
complete two-operation classification of this condition in
characteristic $2$ has not previously been given.

Our starting point is the simple structure of one-generated Poisson
algebras:
\[
\langle a\rangle_P
 =\operatorname{span}_F\{a,a^2,a^3,\ldots\},
\qquad
[\langle a\rangle_P,\langle a\rangle_P]=0.
\]
We prove that the Dedekind condition is equivalent to requiring every
such cyclic subalgebra to be a Poisson ideal.  This immediately yields
\[
[P,P]\subseteq P^2
\]
and implies that every associative subalgebra of $P(+,\cdot)$ is in
fact a Poisson ideal.  We then give an elementary analysis of the
associated commutative associative algebra which is valid in arbitrary
characteristic and does not assume that $P$ is finite-dimensional.
It gives
\[
P(+,\cdot)=E\oplus N,
\]
where $E=0$ or $E=Fe$ with $e^2=e$ and $eN=0$, while
\[
N^3=0,
\qquad
\dim_F N^2\leq1,
\qquad
\operatorname{Ann}(N)=\{x\in N:x^2=0\}.
\]

The Lie multiplication is forced into the same one-dimensional part:
\[
[P,P]=[N,N]\subseteq N^2.
\]
If $N^2=Fc\neq0$, then $Fc$ and the whole associative annihilator of
$N$ are Lie-central.  Choosing vector-space complements gives a direct
sum of vector spaces
\[
P=E\oplus Z\oplus V\oplus Fc,
\]
where $c\neq0$ and
\[
uv=\beta(u,v)c,
\qquad
[u,v]=\omega(u,v)c
\qquad(u,v\in V).
\]
Here $\beta$ is symmetric and satisfies
$\beta(v,v)\neq0$ for every $v\neq0$, while $\omega$ is arbitrary
alternating.  No additional compatibility relation between the two
forms occurs.  We also determine the isomorphism relation on these
data.

The characteristic-$2$ consequences make the two-operation nature of
the result particularly visible.  Writing
\[
F^2=\{a^2:a\in F\},
\]
we prove for the active component $V$ that
\[
\dim_FV\leq [F:F^2].
\]
Hence over a perfect field of characteristic $2$ one has
$\dim_FV\leq1$ and the Lie multiplication is zero, whereas imperfect
fields admit genuinely mixed examples.  More precisely, mixed
Dedekind Poisson algebras exist in characteristic $2$ exactly over
imperfect fields.  Outside characteristic $2$, the corresponding
criterion is the existence of a non-square, in agreement with the
classical Outcalt picture.

The paper is organized as follows.  Section~\ref{sec:preliminaries}
contains the basic definitions and the cyclic criterion.
Section~\ref{sec:associative} explains the relation with Outcalt outside
characteristic $2$ and begins the direct associative analysis.
Section~\ref{sec:nilpart} determines the nil part.  In
Section~\ref{sec:lie} we analyze the Lie multiplication and obtain the
canonical pair of bilinear forms.  The classification and its converse
are proved in Section~\ref{sec:classification}; isomorphisms are treated
in Section~\ref{sec:isomorphism}.  Finally, Section~\ref{sec:fields}
focuses on ground-field effects, especially characteristic $2$.

\section{Preliminaries and the cyclic criterion}
\label{sec:preliminaries}

Throughout the paper, $F$ denotes an arbitrary field. Unless explicitly
stated otherwise, no finite-dimensionality assumption is made and no
restriction is imposed on $\operatorname{char}(F)$.

A \emph{Poisson algebra} over $F$ is an $F$-vector space $P$ endowed
with a commutative associative multiplication
\[
(x,y)\longmapsto xy
\]
and a Lie multiplication
\[
(x,y)\longmapsto[x,y]
\]
satisfying the Leibniz identity
\[
[xy,z]=x[y,z]+y[x,z]
\]
for all $x,y,z\in P$. The Lie multiplication is understood to be
\emph{alternating}, that is,
\[
[x,x]=0
\qquad(x\in P).
\]
Thus in characteristic $2$ we use alternation, not merely
skew-symmetry, as part of the definition. The associative multiplication
is not assumed to have an identity.

We write
\[
P(+,\cdot)
\qquad\text{and}\qquad
P(+,[\, ,\,])
\]
for the associated commutative associative and Lie algebras,
respectively.

A vector subspace $H\leq P$ is a \emph{Poisson subalgebra} if
\[
HH\subseteq H
\qquad\text{and}\qquad
[H,H]\subseteq H.
\]
A vector subspace $I\leq P$ is a \emph{Poisson ideal} if
\[
PI\subseteq I
\qquad\text{and}\qquad
[P,I]\subseteq I.
\]
All subalgebras are understood in the non-unital sense. In particular,
a subalgebra of a unital Poisson algebra need not contain the identity.

For a subspace $H\leq P$, we denote by $H^2$ the linear span of all
products $xy$ with $x,y\in H$. We put $P^1=P$ and define the
\emph{associative powers}
\[
P^{n+1}=P^nP
\qquad(n\geq1).
\]
The associative algebra $P(+,\cdot)$ is called \emph{nilpotent} if
$P^n=0$ for some $n$. An element $x$ is \emph{nilpotent} if $x^n=0$
for some positive integer $n$, and an associative algebra is called a
\emph{nil algebra} if every one of its elements is nilpotent. These two
notions will be distinguished throughout.

The \emph{associative annihilator} of an associative algebra $A$ is
\[
\operatorname{Ann}(A)=\{a\in A\mid aA=0\}.
\]
When $A=P(+,\cdot)$ we also write $\operatorname{Ann}(P(+,\cdot))$.
The \emph{derived algebra} and the \emph{center} of the associated Lie
algebra are
\[
[P,P]=\operatorname{span}_F\{[x,y]\mid x,y\in P\}
\]
and
\[
\zeta(P(+,[\, ,\,]))
 =\{z\in P\mid[z,P]=0\},
\]
respectively. The associated Lie algebra is \emph{nilpotent of class at
most $2$} when
\[
[[P,P],P]=0.
\]

An element $e$ of an associative algebra is an \emph{idempotent} if
$e^2=e$. An algebra is \emph{unital} if it has an identity element.
A \emph{zero Poisson algebra} is a vector space on which both the
associative multiplication and the Lie multiplication are identically
zero.  When only one multiplication is under discussion, a \emph{zero
algebra} means that this multiplication is identically zero.

For a subset $X\subseteq P$, the Poisson subalgebra generated by $X$
is denoted by $\langle X\rangle_P$. For one element we write
$\langle a\rangle_P$ and call it a \emph{cyclic} or
\emph{one-generated Poisson subalgebra}.

\begin{definition}
\label{def:dedekind}
A Poisson algebra $P$ is called a \emph{Dedekind Poisson algebra} if
every Poisson subalgebra of $P$ is a Poisson ideal of $P$.
\end{definition}

We begin with two elementary ideal properties that will be used
repeatedly.

\begin{lemma}
\label{lem:p2-ann}
For every Poisson algebra $P$, the subspaces $P^2$ and
$\operatorname{Ann}(P(+,\cdot))$ are Poisson ideals.
\end{lemma}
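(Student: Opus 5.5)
The plan is to verify directly, for each of the two subspaces, the two closure conditions defining a Poisson ideal: closure under associative multiplication by $P$ and under bracketing with $P$. Associative closure is immediate in both cases, so the work lies in the bracket, where we use the Leibniz identity $[xy,z]=x[y,z]+y[x,z]$ together with commutativity and the alternating property of the bracket.

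For $P^2$: first, $P\cdot P^2\subseteq P^2$ holds trivially, since any product $xw$ with $x\in P$, $w\in P^2$ lies in $P^2$. For the bracket, it suffices by bilinearity to treat a spanning element $xy$ of $P^2$. For $z\in P$ we have
\[
[z,xy]=-[xy,z]=-x[y,z]-y[x,z].
\]
Each term on the right is a product of an element of $P$ with an element of $P$, hence lies in $P^2$. Alternation gives skew-symmetry in every characteristic, so the sign manipulation is legitimate. Thus $[P,P^2]\subseteq P^2$.

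For $A=\operatorname{Ann}(P(+,\cdot))$: if $a\in A$, then $xa=0\in A$ for every $x\in P$, so $PA\subseteq A$. For the bracket, we must show that $[z,a]\in A$, i.e.\ that $x[z,a]=0$ for all $x,z\in P$. Applying the Leibniz identity to the product $xa=0$ gives
\[
0=[xa,z]=x[a,z]+a[x,z].
\]
Here $a[x,z]=0$ because $a$ annihilates $P$, so $x[a,z]=0$ for all $x$. Hence $[a,z]\in A$, and by skew-symmetry also $[z,a]\in A$.

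There is no real obstacle. The only point needing care is the choice of which product to feed into the Leibniz identity in the annihilator case: we use $xa$, which vanishes, rather than expanding $x[z,a]$ directly. We should also note explicitly that skew-symmetry follows from alternation, so the argument is valid in characteristic $2$ as well.
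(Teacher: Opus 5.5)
Your proof is correct and takes essentially the same route as the paper: associative closure is trivial, you apply the Leibniz identity to a spanning product $xy$ to get $[P,P^2]\subseteq P^2$, and you apply it to the vanishing product $xa$ (the paper uses $ay$) to show $[a,z]\in\operatorname{Ann}(P(+,\cdot))$. Your explicit remark that skew-symmetry follows from alternation, which justifies the sign changes in characteristic $2$, is a small but welcome addition that the paper leaves implicit.
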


\begin{proof}
Clearly $PP^2\subseteq P^2$. For $x,y,z\in P$, the Leibniz identity
gives
\[
[x,yz]=y[x,z]+z[x,y]\in P^2,
\]
so $[P,P^2]\subseteq P^2$.

Let $a\in\operatorname{Ann}(P(+,\cdot))$. For arbitrary $x,y\in P$,
we have $ay=0$, and hence
\[
0=[ay,x]=a[y,x]+y[a,x].
\]
The first term is zero because $aP=0$. Therefore
$y[a,x]=0$ for every $y\in P$, and thus
$[a,x]\in\operatorname{Ann}(P(+,\cdot))$.
\end{proof}

The next proposition describes one-generated Poisson algebras in the
form needed below.

\begin{proposition}
\label{prop:one-generated}
Let $a\in P$. Then
\[
\langle a\rangle_P
 =\operatorname{span}_F\{a,a^2,a^3,\ldots\}
\]
and
\[
[\langle a\rangle_P,\langle a\rangle_P]=0.
\]
\end{proposition}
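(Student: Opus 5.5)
Let $A=\operatorname{span}_F\{a,a^2,a^3,\ldots\}$. The plan is to show that $A$ is a Poisson subalgebra on which the bracket vanishes identically, and then to compare it with $\langle a\rangle_P$.

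The associative part is immediate: $a^ia^j=a^{i+j}$, so $AA\subseteq A$.

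The main step is to prove
\[
[a^m,a^n]=0 \qquad (m,n\geq 1).
\]
This is where the Leibniz identity enters, and it is the only point needing care. Since the bracket is alternating, it suffices to prove $[a^m,a]=0$ for all $m$ and then bootstrap.

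First, show by induction on $m$ that
\[
[a^m,a]=m\,a^{m-1}[a,a]
\]
for $m\geq2$, with the convention $[a^1,a]=[a,a]$. The inductive step applies the Leibniz identity to $[a^{m-1}\cdot a,\,a]$, which gives $a^{m-1}[a,a]+a[a^{m-1},a]$. Because $[a,a]=0$ by alternation, we get $[a^m,a]=0$ for all $m\geq1$. This route uses alternation rather than skew-symmetry, so the argument is uniform in characteristic $2$.

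Next, fix $m$ and induct on $n$. The Leibniz identity gives
\[
[a^n a,\,a^m]=a^n[a,a^m]+a[a^n,a^m],
\]
and the first term vanishes by the previous step. By the inductive hypothesis $[a^n,a^m]=0$, so $[a^{n+1},a^m]=0$. By bilinearity, $[A,A]=0\subseteq A$.

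It follows that $A$ is a Poisson subalgebra containing $a$, so $\langle a\rangle_P\subseteq A$. Conversely, any Poisson subalgebra containing $a$ is closed under the associative product, hence contains every $a^n$. Therefore $A\subseteq\langle a\rangle_P$, which proves the equality. The vanishing $[\langle a\rangle_P,\langle a\rangle_P]=0$ is then exactly $[A,A]=0$.
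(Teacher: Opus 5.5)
Your proposal is correct and is essentially the paper's own argument: it proves $[a^m,a]=0$ by one induction using the Leibniz identity and alternation, then $[a^m,a^n]=0$ by a second induction, and concludes that $\operatorname{span}_F\{a,a^2,\ldots\}$ is a Poisson subalgebra containing $a$, with the reverse inclusion coming from closure under powers. Your intermediate formula $[a^m,a]=m\,a^{m-1}[a,a]$ is a harmless detour, since the same induction gives $[a^m,a]=a^{m-1}[a,a]+a[a^{m-1},a]=0$ directly.
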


\begin{proof}
Put
\[
A=\operatorname{span}_F\{a,a^2,a^3,\ldots\}.
\]
It is an associative subalgebra containing $a$. Since $[a,a]=0$, an
induction using the Leibniz identity gives
\[
[a^m,a]=0
\qquad(m\geq1).
\]
A second induction gives
\[
[a^m,a^n]=0
\qquad(m,n\geq1).
\]
Hence $[A,A]=0$, so $A$ is a Poisson subalgebra containing $a$ and
$\langle a\rangle_P\subseteq A$. The reverse inclusion follows because
all positive powers of $a$ belong to $\langle a\rangle_P$.
\end{proof}

The Dedekind condition is therefore controlled completely by cyclic
subalgebras.

\begin{lemma}[Cyclic criterion]
\label{lem:cyclic-criterion}
A Poisson algebra $P$ is Dedekind if and only if
$\langle a\rangle_P$ is a Poisson ideal of $P$ for every $a\in P$.
\end{lemma}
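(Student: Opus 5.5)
The forward implication is immediate. If $P$ is Dedekind, then every Poisson subalgebra is a Poisson ideal, and each $\langle a\rangle_P$ is in particular a Poisson subalgebra. So only the converse needs an argument.

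For the converse, assume that $\langle a\rangle_P$ is a Poisson ideal of $P$ for every $a\in P$, and let $H$ be an arbitrary Poisson subalgebra of $P$. I will show that $H$ is a Poisson ideal by expressing it as a union, and in fact a sum, of cyclic subalgebras. For every $h\in H$ we have $\langle h\rangle_P\subseteq H$, because $H$ is a Poisson subalgebra containing $h$ and $\langle h\rangle_P$ is the smallest such. Concretely, by Proposition~\ref{prop:one-generated}, $\langle h\rangle_P=\operatorname{span}_F\{h,h^2,\ldots\}$, and every power $h^m$ lies in $H$ because $HH\subseteq H$. Hence
\[
H=\sum_{h\in H}\langle h\rangle_P .
\]

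Now take $x\in P$ and $h\in H$. By hypothesis $\langle h\rangle_P$ is a Poisson ideal, so
\[
xh\in\langle h\rangle_P\subseteq H
\qquad\text{and}\qquad
[x,h]\in\langle h\rangle_P\subseteq H.
\]
By bilinearity this gives $PH\subseteq H$ and $[P,H]\subseteq H$. Therefore $H$ is a Poisson ideal, and $P$ is Dedekind.

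There is no real obstacle in this argument. The only point needing care is the inclusion $\langle h\rangle_P\subseteq H$, which follows directly from the definition of the generated subalgebra; the explicit description in Proposition~\ref{prop:one-generated} makes it transparent but is not strictly needed. The argument makes no use of dimension or characteristic, in keeping with the paper's standing assumptions.
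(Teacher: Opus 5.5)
Your proof is correct and follows essentially the same route as the paper: you write $H=\sum_{h\in H}\langle h\rangle_P$ and use that each cyclic subalgebra is a Poisson ideal contained in $H$. The paper finishes with ``a sum of Poisson ideals is a Poisson ideal,'' while you check $xh,[x,h]\in\langle h\rangle_P\subseteq H$ elementwise, which amounts to the same thing.
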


\begin{proof}
Only the converse needs proof. Let every cyclic Poisson subalgebra be
an ideal and let $H\leq P$ be a Poisson subalgebra. Since
$\langle h\rangle_P\subseteq H$ for every $h\in H$ and every element of
$H$ belongs to its own cyclic subalgebra,
\[
H=\sum_{h\in H}\langle h\rangle_P.
\]
A sum of Poisson ideals is again a Poisson ideal. Hence $H$ is an ideal
of $P$.
\end{proof}

\begin{lemma}
\label{lem:quotients}
Let $P$ be a Dedekind Poisson algebra and let $I$ be a Poisson ideal.
Then $P/I$ is a Dedekind Poisson algebra.
\end{lemma}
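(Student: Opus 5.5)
We will deduce the statement from the cyclic criterion (Lemma~\ref{lem:cyclic-criterion}) and the standard correspondence between subalgebras of a quotient and subalgebras of $P$ containing the ideal. Let $\pi\colon P\to P/I$ denote the canonical projection. It is a homomorphism for both operations. By Lemma~\ref{lem:cyclic-criterion} applied to $P/I$, it suffices to show that $\langle \bar a\rangle_{P/I}$ is a Poisson ideal of $P/I$ for every $\bar a\in P/I$.

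Fix $\bar a=a+I$ with $a\in P$. By Proposition~\ref{prop:one-generated},
\[
\langle \bar a\rangle_{P/I}=\operatorname{span}_F\{\bar a,\bar a^2,\ldots\}
=\pi\bigl(\operatorname{span}_F\{a,a^2,\ldots\}\bigr)=\pi(\langle a\rangle_P).
\]
Since $P$ is Dedekind, $\langle a\rangle_P$ is a Poisson ideal of $P$. Hence $J=\langle a\rangle_P+I$ is a Poisson ideal of $P$, being a sum of ideals, and $\pi(J)=\pi(\langle a\rangle_P)$. It remains to note that the image of a Poisson ideal under the surjective homomorphism $\pi$ is a Poisson ideal of $P/I$. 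Indeed, for $x\in P$ and $j\in J$,
\[
\pi(x)\pi(j)=\pi(xj)\in\pi(J),\qquad [\pi(x),\pi(j)]=\pi([x,j])\in\pi(J),
\]
and surjectivity of $\pi$ lets $\pi(x)$ range over all of $P/I$.

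An alternative route avoids the cyclic criterion. Take any Poisson subalgebra $\bar H$ of $P/I$. Its full preimage $H=\pi^{-1}(\bar H)$ is a subspace containing $I$ and closed under both operations, so it is a Poisson subalgebra of $P$, hence an ideal of $P$. Then $\bar H=\pi(H)$ is an ideal of $P/I$ by the same image argument.

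There is no real obstacle here. The only point needing care is verifying that the operations on $P/I$ are the induced ones, so that $\pi$ preserves both products. This holds because $I$ is a two-sided ideal for both operations, so the quotient is well defined as a Poisson algebra. The preimage argument is the cleanest, and I would present that one.
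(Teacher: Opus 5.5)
Your proposal is correct, and the preimage argument you say you would present is exactly the paper's proof: the full preimage of a Poisson subalgebra of $P/I$ is a Poisson subalgebra of $P$ containing $I$, hence a Poisson ideal of $P$, and its image under the surjection $\pi$ is then a Poisson ideal of $P/I$. Your first route via the cyclic criterion is also valid, though passing to $\langle a\rangle_P+I$ is unnecessary, since $\pi(\langle a\rangle_P)$ is already a Poisson ideal of $P/I$ by surjectivity of $\pi$.
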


\begin{proof}
Let $K/I$ be a Poisson subalgebra of $P/I$. Its inverse image $K$ is a
Poisson subalgebra of $P$ containing $I$. Since $P$ is Dedekind, $K$ is
a Poisson ideal. Therefore $K/I$ is a Poisson ideal of $P/I$.
\end{proof}

The first interaction between the two multiplications is especially
strong.

\begin{proposition}
\label{prop:derived-in-square}
If $P$ is a Dedekind Poisson algebra, then
\[
[P,P]\subseteq P^2.
\]
\end{proposition}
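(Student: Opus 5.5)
The plan is to use the cyclic criterion (Lemma~\ref{lem:cyclic-criterion}) together with the explicit description of cyclic subalgebras in Proposition~\ref{prop:one-generated}. Fix $x,y\in P$. Since $\langle x\rangle_P$ is a Poisson ideal, we get
\[
[x,y]\in\langle x\rangle_P=\operatorname{span}_F\{x,x^2,x^3,\ldots\}.
\]
Hence $[x,y]=\lambda x+p(x)$, where $p(x)\in P^2$ is a combination of powers $x^k$ with $k\ge 2$, and $\lambda=\lambda(x,y)\in F$. By symmetry, $[x,y]=-[y,x]\in\langle y\rangle_P$, so $[x,y]=\mu y+q(y)$ with $q(y)\in P^2$.

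The task is to show that the linear coefficients can be absorbed into $P^2$. To do this, I will pass to the quotient $\bar P=P/P^2$. By Lemma~\ref{lem:p2-ann}, $P^2$ is a Poisson ideal, and by Lemma~\ref{lem:quotients}, $\bar P$ is Dedekind. In $\bar P$ the associative product is identically zero, so every cyclic subalgebra is simply $F\bar x$. The statement then reduces to the following claim: \emph{a Dedekind Poisson algebra with zero associative product has zero bracket.}

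For such an algebra, the ideal condition says $[\bar x,\bar y]\in F\bar x\cap F\bar y$ for all $\bar x,\bar y$.
\begin{itemize}
\item If $\bar x,\bar y$ are linearly independent, then $F\bar x\cap F\bar y=0$, so $[\bar x,\bar y]=0$.
\item If they are dependent, then $[\bar x,\bar y]=0$ by alternation.
\end{itemize}
Thus the bracket on $\bar P$ vanishes, which means $[P,P]\subseteq P^2$.

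There is no real obstacle here; the only point needing care is that alternation, rather than skew-symmetry, handles the dependent case uniformly in characteristic $2$. That is exactly how the paper defines the bracket.

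Alternatively, one could avoid the quotient and argue directly. For independent $x,y$ with $x,y\notin P^2$ independent modulo $P^2$, compare the two expressions $[x,y]=\lambda x+p(x)=\mu y+q(y)$ modulo $P^2$ to get $\lambda=\mu=0$. The remaining cases, where $x$ or $y$ lies in $P^2$ or they are dependent modulo $P^2$, are handled by Lemma~\ref{lem:p2-ann} and bilinearity. The quotient argument packages these cases cleanly, so it is the version I would write.
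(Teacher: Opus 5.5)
Your proposal is correct and is essentially the paper's own proof: pass to $P/P^2$, which is Dedekind by Lemmas~\ref{lem:p2-ann} and~\ref{lem:quotients} and has zero associative product, so its one-generated subalgebras are lines. Then $[\bar x,\bar y]\in F\bar x\cap F\bar y=0$ for independent elements, and $[\bar x,\bar y]=0$ by alternation otherwise.
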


\begin{proof}
By Lemma~\ref{lem:p2-ann}, $P^2$ is a Poisson ideal, and by
Lemma~\ref{lem:quotients}, $P/P^2$ is Dedekind. Its associative
multiplication is zero. Consequently every Lie subalgebra of
$P(+,[\, ,\,])/P^2$ is a Poisson subalgebra and hence a Lie ideal.

We recall the elementary fact that a Lie algebra all of whose
subalgebras are ideals is abelian. Indeed, for $x,y$ linearly
independent, the one-dimensional subalgebras $Fx$ and $Fy$ are ideals,
so
\[
[x,y]\in Fx\cap Fy=0;
\]
for linearly dependent $x,y$ the bracket is zero by alternation. Thus
the associated Lie algebra of $P/P^2$ is abelian, which is equivalent
to $[P,P]\subseteq P^2$.
\end{proof}

\begin{corollary}
\label{cor:p2zero}
If $P$ is Dedekind and $P^2=0$, then $[P,P]=0$.
\end{corollary}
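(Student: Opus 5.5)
This is immediate from Proposition~\ref{prop:derived-in-square}. Since $P$ is Dedekind, that proposition gives $[P,P]\subseteq P^2$. Under the hypothesis $P^2=0$ this becomes $[P,P]\subseteq 0$, so the Lie multiplication vanishes identically. Combined with $P^2=0$, this shows $P$ is a zero Poisson algebra.

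For a self-contained argument that bypasses the quotient, we can work in $P$ directly. Because the associative product is zero, every Lie subalgebra of $P(+,[\, ,\,])$ is automatically a Poisson subalgebra, and by the Dedekind condition it is a Poisson ideal. Now take linearly independent $x,y\in P$. The line $Fx$ is a Lie subalgebra, since $[x,x]=0$ by alternation, so it is an ideal. The same holds for $Fy$. Hence
\[
[x,y]\in Fx\cap Fy=0.
\]
If $x$ and $y$ are linearly dependent, then $[x,y]=0$ by alternation. Bilinearity now gives $[P,P]=0$.

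The only point needing care is to use alternation rather than skew-symmetry, so that the argument also works in characteristic $2$. The paper's definition of a Poisson algebra builds in alternation, so there is no obstacle.
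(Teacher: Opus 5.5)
Your proposal is correct and follows the paper: the corollary is an immediate consequence of Proposition~\ref{prop:derived-in-square}. Your self-contained second argument is that proposition's own proof (the lines $Fx$ and $Fy$ are ideals, so $[x,y]\in Fx\cap Fy=0$, using alternation), run directly in $P$ rather than in $P/P^2$.
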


\section{The associated commutative associative algebra}
\label{sec:associative}

We now show that the Dedekind condition forces every associative
subalgebra of $P(+,\cdot)$ to be an ideal.

\begin{proposition}
\label{prop:assoc-subalgebra-ideal}
Let $P$ be a Dedekind Poisson algebra and let $B$ be an associative
subalgebra of $P(+,\cdot)$. Then $B$ is a Poisson subalgebra of $P$ and
an associative ideal of $P(+,\cdot)$.
\end{proposition}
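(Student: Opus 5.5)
The idea is to reduce everything to cyclic subalgebras, using Proposition~\ref{prop:one-generated} and the Cyclic criterion (Lemma~\ref{lem:cyclic-criterion}). Let $B$ be an associative subalgebra of $P(+,\cdot)$. For every $b\in B$, all positive powers $b,b^2,b^3,\ldots$ lie in $B$, because $B$ is closed under the associative product. By Proposition~\ref{prop:one-generated}, therefore,
\[
\langle b\rangle_P=\operatorname{span}_F\{b,b^2,b^3,\ldots\}\subseteq B .
\]
Every element of $B$ lies in its own cyclic subalgebra, so
\[
B=\sum_{b\in B}\langle b\rangle_P .
\]

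Now use the Dedekind hypothesis. Each $\langle b\rangle_P$ is a Poisson subalgebra of $P$, so it is a Poisson ideal of $P$. A sum of Poisson ideals is again a Poisson ideal, hence $B$ is a Poisson ideal of $P$. In particular,
\[
PB\subseteq B\qquad\text{and}\qquad [P,B]\subseteq B,
\]
so $B$ is an associative ideal of $P(+,\cdot)$. It is also a Poisson subalgebra, since $BB\subseteq B$ holds by assumption and $[B,B]\subseteq[P,B]\subseteq B$.

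There is no real obstacle in this argument. The only point that needs care is that the cyclic Poisson subalgebra generated by $b$ involves no brackets at all. This is exactly the content of Proposition~\ref{prop:one-generated}, and it is what guarantees $\langle b\rangle_P\subseteq B$ even though $B$ is assumed closed only under the associative product.
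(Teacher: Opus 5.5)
Your proof is correct and is essentially the paper's argument: both rest on $\langle b\rangle_P\subseteq B$ (by Proposition~\ref{prop:one-generated}) together with the fact that cyclic Poisson subalgebras are Poisson ideals. The only difference is the order of steps. The paper first uses $[y,x]\in\langle x\rangle_P\subseteq B$ to show $B$ is a Poisson subalgebra and then applies the Dedekind hypothesis to $B$. You instead get the ideal property directly from $B=\sum_{b\in B}\langle b\rangle_P$, which repeats the argument of Lemma~\ref{lem:cyclic-criterion}.
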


\begin{proof}
Let $x,y\in B$. By the cyclic criterion,
$\langle x\rangle_P$ is a Poisson ideal, so
\[
[y,x]\in\langle x\rangle_P.
\]
By Proposition~\ref{prop:one-generated}, $\langle x\rangle_P$ is exactly
the associative subalgebra generated by $x$, and hence
$\langle x\rangle_P\subseteq B$. Thus $[x,y]\in B$, and $B$ is a
Poisson subalgebra. Since $P$ is Dedekind, $B$ is a Poisson ideal, in
particular $PB\subseteq B$.
\end{proof}

A not necessarily associative algebra is called \emph{power-associative}
if every one-generated subalgebra is associative.  Following the
classical terminology used in this circle of problems, we shall call
an algebra an \emph{$H$-algebra} if every subalgebra is a two-sided
ideal.

\begin{proposition}[Relation with Outcalt outside characteristic $2$]
\label{prop:outcalt-bridge}
Assume $\operatorname{char}(F)\neq2$ and define on a Poisson algebra
$P$ the bilinear product
\[
x*y=xy+[x,y].
\]
Then $(P,*)$ is power-associative.  A vector subspace of $P$ is a
$*$-subalgebra if and only if it is a Poisson subalgebra, and it is a
two-sided $*$-ideal if and only if it is a Poisson ideal.  Consequently,
$P$ is a Dedekind Poisson algebra if and only if $(P,*)$ is an
$H$-algebra.
\end{proposition}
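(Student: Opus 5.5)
The plan is to verify the three claims separately and then read off the consequence.

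\emph{Power-associativity.} Fix $a\in P$ and let $A$ be the $*$-subalgebra generated by $a$. By Proposition~\ref{prop:one-generated}, the span $\operatorname{span}_F\{a,a^2,\ldots\}$ is closed under the associative product and is Lie-abelian. On this span, therefore, $x*y=xy$. It is consequently $*$-closed and contains $a$, so $A$ is contained in it. Because $A$ is $*$-closed and $*$ agrees with $\cdot$ there, $A$ contains all powers $a^n$. Hence $A=\operatorname{span}_F\{a,a^2,\ldots\}$ with $*=\cdot$ on $A$. Since the associative product is associative, $(A,*)$ is associative. This holds in any characteristic.

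\emph{Subalgebras and ideals.} Since $\operatorname{char}(F)\neq2$, we have the recovery formulas $xy=\tfrac12(x*y+y*x)$ and $[x,y]=\tfrac12(x*y-y*x)$. If $H$ is a Poisson subalgebra, then $H*H\subseteq HH+[H,H]\subseteq H$. Conversely, if $H*H\subseteq H$, then for $x,y\in H$ both $x*y$ and $y*x$ lie in $H$, and the recovery formulas put $xy$ and $[x,y]$ in $H$.

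For ideals the argument is the same. If $I$ is a Poisson ideal, then $P*I$ and $I*P$ lie in $PI+[P,I]\subseteq I$, using commutativity and alternation (so that $[I,P]=[P,I]$). Conversely, if $I$ is a two-sided $*$-ideal, then for $p\in P$ and $i\in I$ both $p*i$ and $i*p$ lie in $I$. Taking half their sum gives $pi\in I$, and taking half their difference gives $[p,i]\in I$.

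\emph{Consequence.} Combining the two correspondences, every Poisson subalgebra is a Poisson ideal exactly when every $*$-subalgebra is a two-sided $*$-ideal. That is, $P$ is Dedekind if and only if $(P,*)$ is an $H$-algebra.

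No step is a genuine obstacle. The one point needing care is the power-associativity step: one must check that the $*$-subalgebra generated by $a$ really coincides with the associative cyclic span. This rests entirely on Proposition~\ref{prop:one-generated}. The only use of $\operatorname{char}(F)\neq2$ is the division by $2$ in the recovery formulas.
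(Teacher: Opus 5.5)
Your proposal is correct and follows the paper's proof: the recovery formulas $xy=\tfrac12(x*y+y*x)$ and $[x,y]=\tfrac12(x*y-y*x)$ give both correspondences, and Proposition~\ref{prop:one-generated} shows that $*$ agrees with the associative product on one-generated subalgebras. Your explicit check that the $*$-subalgebra generated by $a$ coincides with $\operatorname{span}_F\{a,a^2,\ldots\}$ spells out a step the paper leaves implicit, and, as you note, it needs no division by $2$.
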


\begin{proof}
Since $\operatorname{char}(F)\neq2$,
\[
xy=\frac{x*y+y*x}{2},
\qquad
[x,y]=\frac{x*y-y*x}{2}.
\]
Thus closure under $*$ is equivalent to simultaneous closure under the
associative product and the Lie bracket, and the same formulas show
that two-sided $*$-stability is equivalent to the two Poisson ideal
conditions.  Finally, Proposition~\ref{prop:one-generated} shows that
on every one-generated Poisson subalgebra the bracket is zero, so $*$
coincides there with the associative multiplication.  Hence $(P,*)$ is
power-associative.
\end{proof}

\begin{remark}[Agreement with the classical classification]
\label{rem:classical-h-algebras}
Outcalt's theorem \cite{Outcalt1967} therefore applies directly to
Dedekind Poisson algebras when $\operatorname{char}(F)\neq2$.  In its
non-zero nil component the product has the form
\[
u*v=\alpha(u,v)c
\]
with $c$ in the annihilator and $\alpha(v,v)\neq0$ for $v\neq0$.
Writing
\[
\beta(u,v)=\frac{\alpha(u,v)+\alpha(v,u)}2,
\qquad
\omega(u,v)=\frac{\alpha(u,v)-\alpha(v,u)}2
\]
gives precisely the pair of forms occurring in our canonical form
below.  Thus the classification obtained here agrees with Outcalt's
classification outside characteristic $2$.  This agreement also gives
an independent consistency check on the direct Poisson argument.  We
nevertheless keep the direct proof from the two Poisson operations: it
uses a different route through cyclic Poisson subalgebras and, more
importantly, remains valid in characteristic $2$, where the product
$*$ no longer determines the two operations separately.
\end{remark}

The first step is that cyclic associative subalgebras are always
finite-dimensional.

\begin{lemma}
\label{lem:cyclic-finite}
Let $P$ be Dedekind and $a\in P$. Then the associative subalgebra
$\langle a\rangle_{P(+,\cdot)}$ is finite-dimensional.
\end{lemma}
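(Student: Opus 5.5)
The plan is to show that the powers $a,a^2,a^3,\ldots$ are linearly dependent, and then to deduce finite-dimensionality from a single polynomial relation. By Proposition~\ref{prop:one-generated}, $A=\langle a\rangle_{P(+,\cdot)}=\operatorname{span}_F\{a,a^2,a^3,\ldots\}$. The key input is Proposition~\ref{prop:assoc-subalgebra-ideal}, applied not to $A$ but to a suitable proper associative subalgebra of $A$.

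First, consider the associative subalgebra generated by $a^2$, namely
\[
B=\operatorname{span}_F\{a^2,a^4,a^6,\ldots\}.
\]
By Proposition~\ref{prop:assoc-subalgebra-ideal}, $B$ is an associative ideal of $P(+,\cdot)$, so $a\cdot a^2=a^3\in B$. Therefore $a^3=\sum_{j}\mu_j a^{2j}$ for finitely many scalars $\mu_j$. This is a nontrivial linear relation among the powers of $a$, because the coefficient of $a^3$ is $1$ and $a^3$ does not occur among the even powers. So the powers of $a$ are linearly dependent. (If $a^3=0$ this already holds trivially.)

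Second, take such a nontrivial relation $\sum_{i=1}^n\lambda_i a^i=0$ with $\lambda_n\neq0$. Solving for the top term gives
\[
a^n\in\operatorname{span}_F\{a,\ldots,a^{n-1}\}.
\]
Multiplying by $a$ and inducing on $m$ shows that $a^m\in\operatorname{span}_F\{a,\ldots,a^{n-1}\}$ for all $m\geq n$. Indeed, $a^{m+1}=a\cdot a^m$ lies in the span of $a^2,\ldots,a^n$, and $a^n$ is then rewritten in terms of lower powers. Hence $\dim_F A\leq n-1$, or $\dim_F A\le n$ in any case, which is finite.

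I do not expect a serious obstacle here. The only point needing care is the choice of subalgebra. Applying the Dedekind property to $A$ itself gives nothing, so one must choose $B=\langle a^2\rangle$, which is strictly smaller in the generic (polynomial-like) situation. One must then check that the resulting membership $a^3\in B$ genuinely yields a nontrivial dependence, and the parity of exponents makes this immediate.
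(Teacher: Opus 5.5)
Your proposal is correct and follows the paper's proof essentially step by step. You apply Proposition~\ref{prop:assoc-subalgebra-ideal} to the subalgebra generated by $a^2$ to get $a^3\in\operatorname{span}_F\{a^2,a^4,\ldots\}$, then solve the resulting nontrivial relation for its highest power and propagate by multiplying by $a$; your explicit induction step $a^{m+1}\in\operatorname{span}_F\{a^2,\ldots,a^n\}$ spells out what the paper leaves as ``recursively''.
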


\begin{proof}
By Proposition~\ref{prop:assoc-subalgebra-ideal}, the associative
subalgebra generated by $a^2$ is an ideal. Therefore
\[
a^3\in\langle a^2\rangle_{P(+,\cdot)}
 =\operatorname{span}_F\{a^2,a^4,a^6,\ldots\}.
\]
Thus
\[
a^3=\lambda_1a^2+\lambda_2a^4+\cdots+\lambda_ma^{2m}
\]
for some finite set of scalars. This is a non-zero polynomial relation
among the positive powers of $a$. If all $\lambda_i$ are zero, then
$a^3=0$. Otherwise let $d$ be the largest exponent occurring with a
non-zero coefficient in the displayed relation. If $d>3$, we solve for
$a^d$ in terms of lower powers; if $d=3$, the displayed equality already
expresses $a^3$ in terms of lower powers. Multiplying the resulting
relation by further powers of $a$ shows recursively that all sufficiently
high powers lie in the span of finitely many lower ones. Hence the cyclic
associative subalgebra is finite-dimensional.
\end{proof}

We shall use the following standard finite-dimensional fact.

\begin{lemma}
\label{lem:A2A-identity}
Let $A$ be a finite-dimensional commutative associative algebra. If
$A^2=A$, then $A$ has an identity element.
\end{lemma}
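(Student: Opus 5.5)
The plan is to run Nakayama's lemma in its determinant-trick form, which holds for finite-dimensional commutative algebras that need not have an identity. We regard $A$ as a finitely generated module over itself; no identity is needed for this, since we can work inside the unitization $A^{\#}=F\cdot 1\oplus A$. Choose a basis $a_1,\dots,a_n$ of $A$. Since $A=A^2=A\cdot A$, each basis vector can be written as
\[
a_i=\sum_{j=1}^n r_{ij}a_j\qquad(r_{ij}\in A),
\]
because every element of $A^2$ is a sum of products $xa_j$ with $x\in A$ after expanding the second factor in the basis. Let $R=(r_{ij})$, a matrix over the commutative ring $A^{\#}$. Then $(I-R)\mathbf a=0$, where $\mathbf a=(a_1,\dots,a_n)^{T}$.

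Multiply on the left by the adjugate of $I-R$, computed in the commutative ring $A^{\#}$. This gives $\det(I-R)\,a_i=0$ for all $i$. Expanding the determinant, $\det(I-R)=1-e$ for some $e\in A$, since every term other than $1$ contains at least one entry of $R$ and hence lies in the ideal $A$ of $A^{\#}$. Therefore $a_i=ea_i$ for all $i$, so by linearity $ea=a$ for every $a\in A$. Commutativity then makes $e$ a two-sided identity of $A$.

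I expect essentially no obstacle; the only care needed is in justifying two routine points. First, the adjugate identity holds over any commutative ring, and $A^{\#}$ is commutative and associative. Second, the determinant lies in $1+A$. An alternative route, if one prefers to avoid the unitization, would use Fitting's decomposition for the multiplication operators together with the structure of finite-dimensional commutative algebras: $A=A_{\mathrm{ss}}\oplus\mathrm{Rad}(A)$-type splitting via idempotents, where $A^2=A$ forces the nil part to vanish modulo the idempotent part. The determinant trick is shorter and characteristic-free, so it is the one I would write.
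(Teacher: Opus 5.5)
Your proof is correct and follows essentially the same approach as the paper: express the basis vectors as $a_i=\sum_j r_{ij}a_j$ with $r_{ij}\in A$, multiply by the adjugate of $I-R$ in the unitization $F1\oplus A$, and read off $\det(I-R)=1-e$ with $ea=a$. Your explicit justification that such $r_{ij}$ exist, obtained by expanding the second factor of each product in the basis, is a detail the paper leaves implicit; the paper's argument is otherwise the same.
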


\begin{proof}
Choose a basis $a_1,\ldots,a_n$ of $A$. Since $A=A^2$, for every $i$
there exist $c_{ij}\in A$ such that
\[
a_i=\sum_{j=1}^n c_{ij}a_j.
\]
Let $C=(c_{ij})$ and work in the unitization $F1\oplus A$. Then
\[
(I-C)
\begin{pmatrix}
a_1\\ \vdots\\ a_n
\end{pmatrix}=0.
\]
Multiplication by the adjugate matrix yields
\[
\det(I-C)a_i=0
\qquad(1\leq i\leq n).
\]
The constant term of $\det(I-C)$ is $1$, while every other term belongs
to $A$. Hence $\det(I-C)=1-e$ for some $e\in A$. Thus
$(1-e)a_i=0$ for every $i$, so $ea=a$ for every $a\in A$.
\end{proof}

\begin{lemma}
\label{lem:nonnil-idempotent}
Let $P$ be a Dedekind Poisson algebra. If $P(+,\cdot)$ contains a
non-nilpotent element, then it contains a non-zero idempotent.
\end{lemma}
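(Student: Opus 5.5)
Let $a\in P$ be non-nilpotent and put $A=\langle a\rangle_{P(+,\cdot)}=\operatorname{span}_F\{a,a^2,\ldots\}$. By Lemma~\ref{lem:cyclic-finite}, $A$ is a finite-dimensional commutative associative algebra. The plan is to find inside $A$ a subalgebra $B$ with $B^2=B\neq0$ and then apply Lemma~\ref{lem:A2A-identity} to $B$. Its identity element is then a non-zero idempotent of $P(+,\cdot)$.

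To produce $B$, I consider the descending chain of subspaces
\[
A\supseteq A^2\supseteq A^3\supseteq\cdots .
\]
Since $A$ is finite-dimensional, the chain stabilizes, so there is $k$ with $A^k=A^{k+1}=A^{2k}$. Put $B=A^k$. Then $B$ is a subalgebra and $B^2=A^{2k}=A^k=B$. Note that $A^m=\operatorname{span}_F\{a^m,a^{m+1},\ldots\}$, because $A$ is generated by $a$. Hence $a^m\in A^m$ for every $m$. If $B=0$, then $a^k=0$, contradicting non-nilpotence. So $B\neq0$.

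Applying Lemma~\ref{lem:A2A-identity} to the finite-dimensional algebra $B$ gives an identity element $e\in B$. Then $e^2=e$, and $e\neq0$ because $B\neq0$ and $eb=b$ for all $b\in B$. So $e$ is the required non-zero idempotent.

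The only step needing care is the identification $A^m=\operatorname{span}\{a^j:j\ge m\}$, which is what guarantees $B\neq0$. This follows directly from the definition of associative powers for a one-generated algebra, so I expect no real obstacle. In particular, the Dedekind hypothesis enters only through Lemma~\ref{lem:cyclic-finite}.
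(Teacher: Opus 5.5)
Your proof is correct and matches the paper's argument step for step. You use Lemma~\ref{lem:cyclic-finite} for finite-dimensionality, take the stable non-zero term $B$ of the chain of powers (so $B^2=B$), and apply Lemma~\ref{lem:A2A-identity}; your identification $A^m=\operatorname{span}_F\{a^j:j\geq m\}$ simply spells out what the paper asserts as ``its stable term is non-zero.''
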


\begin{proof}
Let $a$ be non-nilpotent and let $B$ be the associative subalgebra
generated by $a$. By Lemma~\ref{lem:cyclic-finite}, $B$ is
finite-dimensional. The descending chain
\[
B\supseteq B^2\supseteq B^3\supseteq\cdots
\]
stabilizes. Since $a$ is not nilpotent, its stable term is non-zero.
Thus for some $m$,
\[
C=B^m=B^{m+1}=\cdots\neq0.
\]
Then $C^2=B^{2m}=C$, so Lemma~\ref{lem:A2A-identity} gives an identity
$e$ of $C$. In particular $e^2=e\neq0$.
\end{proof}

The Dedekind condition allows at most one non-zero idempotent.

\begin{lemma}
\label{lem:unique-idempotent}
A Dedekind Poisson algebra contains at most one non-zero idempotent of
its associated associative algebra.
\end{lemma}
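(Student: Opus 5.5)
Let $e,f$ be non-zero idempotents of $P(+,\cdot)$; the plan is to show $e=f$. The main tool is Proposition~\ref{prop:assoc-subalgebra-ideal}: every associative subalgebra of $P(+,\cdot)$ is an associative ideal. For an idempotent $e$ the cyclic subalgebra is $Fe$, since all powers of $e$ equal $e$. Hence $Fe$ is an ideal, and so $xe\in Fe$ for every $x\in P$. Similarly $Ff$ is an ideal.

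First I compute $ef$. It lies in $Fe\cap Ff$. If $e$ and $f$ are linearly independent, this intersection is zero, so $ef=0$. If they are dependent, say $f=\lambda e$, then $f^2=f$ gives $\lambda^2e=\lambda e$. Since $f\neq0$ we have $\lambda\neq0$, so $\lambda=1$ and $f=e$, as required.

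It remains to exclude the independent case, where $ef=0$. In that case $g=e+f$ is again an idempotent, because $g^2=e+2ef+f=e+f$. It is non-zero and independent of $e$. So $Fg$ is an ideal, and therefore $ge\in Fg$. But $ge=e^2+fe=e$, so $e\in Fg$, contradicting the independence of $e$ and $g$. Hence $e=f$.

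There is no real obstacle here. The only point to check is that the argument works in characteristic $2$, and it does: the cross term $2ef$ vanishes because $ef=0$, not because of the characteristic. Alternatively, one can avoid $g$ altogether: the associative subalgebra $F(e+f)$ is an ideal, so $e=(e+f)e\in F(e+f)$, which is impossible for independent $e,f$.
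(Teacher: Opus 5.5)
Your proof is correct and follows essentially the same route as the paper: distinct non-zero idempotents are linearly independent, $ef\in Fe\cap Ff=0$ by Proposition~\ref{prop:assoc-subalgebra-ideal}, and then $e=e(e+f)\in F(e+f)$ contradicts independence. Your closing ``alternative'' is not really different, since it still needs $ef=0$ both to know that $F(e+f)$ is a subalgebra and to get $(e+f)e=e$.
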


\begin{proof}
Suppose $e$ and $f$ are distinct non-zero idempotents. Then they are
linearly independent: if $f=\lambda e$, the equality $f^2=f$ gives
$\lambda=1$. The one-dimensional associative subalgebras $Fe$ and
$Ff$ are ideals by Proposition~\ref{prop:assoc-subalgebra-ideal}, so
\[
ef\in Fe\cap Ff=0.
\]
Hence $e+f$ is an idempotent. The subalgebra $F(e+f)$ is again an
ideal, and therefore
\[
e=e(e+f)\in F(e+f),
\]
contrary to the linear independence of $e$ and $f$.
\end{proof}

\begin{proposition}[Idempotent--nil decomposition]
\label{prop:idempotent-nil}
Let $P$ be a Dedekind Poisson algebra. Then
\[
P(+,\cdot)=E\oplus N,
\]
where either $E=0$, or
\[
E=Fe,
\qquad e^2=e,
\]
and in the latter case $eN=0$. Every element of $N$ is nilpotent.
\end{proposition}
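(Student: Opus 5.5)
If $P(+,\cdot)$ is nil, I take $E=0$ and $N=P$, and there is nothing to prove. Otherwise $P(+,\cdot)$ has a non-nilpotent element, so by Lemma~\ref{lem:nonnil-idempotent} it contains a non-zero idempotent $e$. By Lemma~\ref{lem:unique-idempotent}, $e$ is the only non-zero idempotent. I put $E=Fe$, which is an associative subalgebra. By Proposition~\ref{prop:assoc-subalgebra-ideal} it is an associative ideal, so $eP\subseteq Fe$. Write $ex=\varphi(x)e$ for a linear functional $\varphi$, and set
\[
N=\ker\varphi=\{x\in P: ex=0\}.
\]
Since $\varphi(e)=1$, we get $P=Fe\oplus N$, with $x=\varphi(x)e+(x-\varphi(x)e)$. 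Because $N$ is the annihilator of $e$, it is an associative subalgebra, and $eN=0$ by definition.

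The remaining point, and the only substantive one, is that every element of $N$ is nilpotent. Suppose $a\in N$ is not nilpotent. I would apply the argument of Lemma~\ref{lem:nonnil-idempotent} to the associative subalgebra $B$ generated by $a$. This subalgebra is finite-dimensional by Lemma~\ref{lem:cyclic-finite}, so it yields a non-zero idempotent $f\in B\subseteq N$. Then $ef=0$ while $ee=e\neq0$, so $f\neq e$. This contradicts Lemma~\ref{lem:unique-idempotent}. Hence $N$ is nil.

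I expect no real obstacle here. The one point needing care is that the idempotent produced by Lemma~\ref{lem:nonnil-idempotent} lies inside $B$, hence inside $N$, and not merely somewhere in $P$. This holds because the proof of that lemma constructs it as the identity of $C=B^m\subseteq B$.
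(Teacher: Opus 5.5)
Your proof is correct and follows the paper's argument essentially step by step. As in the paper, $Fe$ is an ideal by Proposition~\ref{prop:assoc-subalgebra-ideal}, and $N$ is the kernel of multiplication by $e$; your functional $\varphi$ is just the paper's idempotent operator $L_e$ written in coordinates. Nilness of $N$ then follows from Lemma~\ref{lem:nonnil-idempotent} applied inside the cyclic subalgebra, together with Lemma~\ref{lem:unique-idempotent}, and your explicit check that $f\in B\subseteq N$ with $f\neq e$ (since $ef=0$) spells out a point the paper leaves implicit.
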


\begin{proof}
If $P(+,\cdot)$ has no non-zero idempotent, Lemma~\ref{lem:nonnil-idempotent}
shows that every element is nilpotent. We take $E=0$ and $N=P$.

Suppose that $e$ is a non-zero idempotent. Since $Fe$ is an associative
subalgebra, it is an ideal. Hence $eP\subseteq Fe$, and because
$e=e^2\in eP$, we have $eP=Fe$. The linear map
\[
L_e:P\longrightarrow P,
\qquad
L_e(x)=ex,
\]
is idempotent. Therefore
\[
P=\operatorname{Im}L_e\oplus\ker L_e
 =Fe\oplus N,
\qquad
N=\ker L_e,
\]
and $eN=0$. The subspace $N$ is an associative ideal.

If some $n\in N$ were non-nilpotent, Lemma~\ref{lem:nonnil-idempotent},
applied to the cyclic subalgebra generated by $n$, would produce a
non-zero idempotent $f\in N$. This contradicts
Lemma~\ref{lem:unique-idempotent}. Thus every element of $N$ is
nilpotent.
\end{proof}

\section{The nilpotent part}
\label{sec:nilpart}

We now analyze the nil algebra $N$ from
Proposition~\ref{prop:idempotent-nil}. The arguments in this section do
not use finite-dimensionality and do not divide by $2$.

\begin{lemma}
\label{lem:x3zero}
For every $x\in N$,
\[
x^3=0.
\]
\end{lemma}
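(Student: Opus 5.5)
Let $x\in N$. Since $x$ is nilpotent, choose the least $n\geq1$ with $x^n=0$. If $n\leq3$ we are done, so suppose $n\geq4$; the aim is a contradiction. The tool is Proposition~\ref{prop:assoc-subalgebra-ideal}: every associative subalgebra of $P(+,\cdot)$ is an associative ideal. Apply it to the associative subalgebra generated by $x^2$, namely
\[
B=\operatorname{span}_F\{x^2,x^4,x^6,\ldots\}.
\]
Since $B$ is an ideal, $x\cdot x^2=x^3\in B$, so
\[
x^3=\lambda_1x^2+\lambda_2x^4+\cdots+\lambda_mx^{2m}
\]
for suitable scalars, where only terms with $2k<n$ are non-zero.

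Next I extract a contradiction by looking at the lowest-degree terms. Multiply the relation by $x^{n-3}$; since $n\geq4$ this exponent is at least $1$. The left side becomes $x^n=0$. On the right side, $\lambda_1x^{n-1}$ survives and every term $\lambda_kx^{2k+n-3}$ with $k\geq2$ has exponent $\geq n+1$, hence vanishes. Thus $\lambda_1x^{n-1}=0$, and minimality of $n$ gives $\lambda_1=0$. Then
\[
x^3=\lambda_2x^4+\cdots+\lambda_mx^{2m}=x^3\cdot x\,(\lambda_2+\lambda_3x^2+\cdots),
\]
where the bracket is read in the unitization $F1\oplus P$. Write $x^3=x^3y$ with $y\in\langle x\rangle_{P(+,\cdot)}$. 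Iterating gives $x^3=x^3y^k$ for every $k$. Since $y$ is a polynomial in $x$ without constant term, $y^k\in x^kP$, and so $x^3=x^3y^k$ lies in the span of powers $x^j$ with $j\geq k+3$. Taking $k\geq n$ gives $x^3=0$. This contradicts $x^{n-1}\neq0$ with $n-1\geq3$.

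The main obstacle is bookkeeping rather than ideas. One has to check that multiplying by $x^{n-3}$ indeed kills every higher term and isolates the coefficient of $x^2$; this uses $n\geq4$. One also has to handle the terms beyond $x^{n-1}$, which vanish automatically. No division by $2$ occurs and no finite-dimensionality beyond the nilpotency of $x$ is used, so the argument holds in every characteristic.
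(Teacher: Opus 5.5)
Your proof is correct and is essentially the paper's argument. Both proofs use that the associative subalgebra generated by $x^2$ is an ideal to get $x^3\in\operatorname{span}_F\{x^2,x^4,\dots\}$, and then rule out nilpotency index $n\geq4$ by a lowest-degree comparison, which the paper packages as linear independence of $x,\dots,x^{n-1}$. Two small remarks: your claim $y^k\in x^kP$ should read $y^k\in\operatorname{span}_F\{x^j: j\geq k\}$, which is what you actually use. Also, once $\lambda_1=0$, you can replace the iteration $x^3=x^3y^k$ by multiplying $x^3=\lambda_2x^4+\cdots$ by $x^{n-4}$ (doing nothing when $n=4$), which gives $x^{n-1}=0$ directly.
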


\begin{proof}
The assertion is immediate for $x=0$, so assume $x\neq0$. Let $m$ be
the nilpotency index of $x$, so $x^m=0$ and $x^{m-1}\neq0$. If
$m\geq4$, then
\[
x,x^2,\ldots,x^{m-1}
\]
are linearly independent. Indeed, if
$\sum_{i=1}^{m-1}\alpha_i x^i=0$ and $k$ is the least index with
$\alpha_k\neq0$, multiplication by $x^{m-1-k}$ gives
$\alpha_kx^{m-1}=0$, a contradiction.

On the other hand, the associative subalgebra generated by $x^2$ is an
ideal, so
\[
x^3\in\langle x^2\rangle_{P(+,\cdot)}
 =\operatorname{span}_F\{x^2,x^4,x^6,\ldots\}.
\]
After deleting the powers which are already zero, this contradicts the
linear independence above. Hence $m\leq3$.
\end{proof}

\begin{lemma}
\label{lem:ann-squarezero}
For $x\in N$,
\[
x^2=0
\quad\Longleftrightarrow\quad
x\in\operatorname{Ann}(N).
\]
Consequently,
\[
\operatorname{Ann}(N)=\{x\in N\mid x^2=0\}.
\]
\end{lemma}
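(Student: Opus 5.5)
The reverse implication is immediate: if $xN=0$ then in particular $x^2=x\cdot x=0$. For the forward implication, fix $x\in N$ with $x^2=0$ and an arbitrary $y\in N$; the aim is to show $xy=0$. The tools are Proposition~\ref{prop:assoc-subalgebra-ideal}, which makes every associative subalgebra an ideal, and Lemma~\ref{lem:x3zero}, which gives $z^3=0$ for every $z\in N$.

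First, since $x^2=0$, the associative subalgebra generated by $x$ is $Fx$. By Proposition~\ref{prop:assoc-subalgebra-ideal} it is an ideal, so
\[
xy=\lambda x \qquad\text{for some }\lambda\in F.
\]
Multiplying by $y$ and using associativity and commutativity repeatedly gives $xy^k=\lambda^k x$ for all $k\geq1$. Since $y\in N$, Lemma~\ref{lem:x3zero} gives $y^3=0$, hence $\lambda^3x=xy^3=0$. If $x\neq0$ this forces $\lambda=0$, so $xy=0$; if $x=0$ there is nothing to prove. Since $y\in N$ was arbitrary, $x\in\operatorname{Ann}(N)$.

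I do not expect a real obstacle. The only point to check is that $Fx$ is an associative subalgebra, which holds because $x^2=0$, so that Proposition~\ref{prop:assoc-subalgebra-ideal} applies. Mere nilpotency of $y$ would suffice in place of Lemma~\ref{lem:x3zero}. The final equality $\operatorname{Ann}(N)=\{x\in N\mid x^2=0\}$ is just a restatement of the equivalence.
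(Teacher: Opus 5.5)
Your proof is correct and follows essentially the same argument as the paper: $Fx$ is an associative subalgebra, hence an ideal, so $xy=\lambda x$, and nilpotency of $y$ gives $\lambda^m x=xy^m=0$, forcing $\lambda=0$. The paper uses only the nilpotency of $y$ instead of Lemma~\ref{lem:x3zero}, which, as you note yourself, is all that is needed.
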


\begin{proof}
Only the forward implication is non-trivial. Suppose $x^2=0$. Then
$Fx$ is an associative subalgebra and hence an associative ideal. Thus
for every $y\in N$,
\[
xy=\lambda x
\]
for some $\lambda\in F$. Since $y$ is nilpotent, $y^m=0$ for some $m$.
Associativity gives
\[
0=xy^m=\lambda^m x.
\]
If $x\neq0$, then $\lambda=0$; the assertion is trivial when $x=0$.
Hence $xN=0$.
\end{proof}

\begin{lemma}
\label{lem:xN-line}
If $x\in N$ and $x^2\neq0$, then
\[
xN\subseteq Fx^2.
\]
\end{lemma}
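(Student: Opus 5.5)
The plan is to use that the associative subalgebra generated by $x$ is an ideal (Proposition~\ref{prop:assoc-subalgebra-ideal}). Then I will use nilpotency of $N$-elements to kill the component along $x$.

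By Lemma~\ref{lem:x3zero}, $x^3=0$, so
\[
\langle x\rangle_{P(+,\cdot)}=\operatorname{span}_F\{x,x^2\}=Fx+Fx^2.
\]
Since $x^2\neq0$ and $x^3=0$, the elements $x,x^2$ are linearly independent: if $x^2=\mu x$, then $0=x^3=\mu x^2=\mu^2x$ would force $x^2=0$. Because $Fx+Fx^2$ is an associative ideal, for every $y\in N$ we can write
\[
xy=\lambda x+\mu x^2
\]
for some scalars $\lambda,\mu$ depending on $y$. It remains to prove $\lambda=0$.

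Multiplying by $x$ and using $x^3=0$ gives $x^2y=\lambda x^2$. Iterating, $x^2y^k=\lambda^kx^2$. Since $y$ is nilpotent, $y^m=0$ for some $m$, so $\lambda^mx^2=0$. As $x^2\neq0$, this forces $\lambda=0$, and hence $xy=\mu x^2\in Fx^2$.

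There is no real obstacle beyond checking the linear independence of $x$ and $x^2$. Independence is needed so that the coefficient $\lambda$ is well defined, although the argument above shows $xy\in Fx^2$ regardless of which representation is chosen. The argument is characteristic-free and needs no finite-dimensionality.
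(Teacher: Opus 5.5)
Your proof is correct and follows the paper's argument. You write $xy=\lambda x+\mu x^2$, using that $Fx+Fx^2$ is an associative ideal, then multiply by $x$ to show $\lambda=0$. The only difference is in how $\lambda$ is killed. The paper gets $x^2y=0$ directly from $x^2\in\operatorname{Ann}(N)$, via Lemma~\ref{lem:ann-squarezero} since $(x^2)^2=0$. You instead inline the nilpotency argument that proves that lemma: $x^2y^k=\lambda^kx^2$ and $y^m=0$.
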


\begin{proof}
By Lemma~\ref{lem:x3zero},
\[
\langle x\rangle_{P(+,\cdot)}=Fx+Fx^2.
\]
This subalgebra is an ideal, so for $y\in N$ we may write
\[
xy=\alpha x+\beta x^2.
\]
Now $(x^2)^2=x^4=0$, and Lemma~\ref{lem:ann-squarezero} gives
$x^2\in\operatorname{Ann}(N)$. Hence
\[
0=x^2y=x(xy)=\alpha x^2+\beta x^3=\alpha x^2.
\]
Since $x^2\neq0$, we have $\alpha=0$.
\end{proof}

The square of $N$ is therefore either zero or one-dimensional.

\begin{proposition}
\label{prop:N2}
Either $N^2=0$, or
\[
\dim_F N^2=1
\qquad\text{and}\qquad
N^3=0.
\]
More precisely, if $a^2=c\neq0$, then
\[
N^2=Fc
\qquad\text{and}\qquad
c\in\operatorname{Ann}(N).
\]
\end{proposition}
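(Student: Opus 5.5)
Suppose $N^2\neq0$. Then some product $xy$ with $x,y\in N$ is non-zero. First I would show that some square $a^2$ is then non-zero. If all squares vanished, every element of $N$ would lie in $\operatorname{Ann}(N)$ by Lemma~\ref{lem:ann-squarezero}, and then $N^2=0$. So fix $a\in N$ with $c=a^2\neq0$. Since $c^2=a^4=0$ by Lemma~\ref{lem:x3zero}, Lemma~\ref{lem:ann-squarezero} gives $c\in\operatorname{Ann}(N)$. The main claim is then $N^2=Fc$. Once we have it, $N^3=N^2N=cN=0$ and $\dim_FN^2=1$ follow at once.

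To prove $N^2\subseteq Fc$, it suffices to show $xy\in Fc$ for all $x,y\in N$. I would reduce this to squares by polarization: $xy$ and $x^2$ are controlled by Lemma~\ref{lem:xN-line}. Concretely, if $x^2\neq0$ then $xN\subseteq Fx^2$, and if $x^2=0$ then $xN=0$. So every product lies in the span of the non-zero squares. It therefore remains to show $x^2\in Fc$ for every $x\in N$ with $x^2\neq0$.

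For this I will compare $x$ with $a$. By Lemma~\ref{lem:xN-line} applied to $a$, we get $ax\in Fa^2=Fc$. Applied to $x$, we get $ax=xa\in Fx^2$. If $ax\neq0$, these give $Fc=F(ax)=Fx^2$, and we are done.

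The remaining case is $ax=0$. Then I would consider $a+x$, where $(a+x)^2=c+x^2$. If $c+x^2=0$, then $x^2=-c\in Fc$ immediately. Otherwise Lemma~\ref{lem:xN-line} applied to $a+x$ gives $a(a+x)=c\in F(c+x^2)$. Since $c\neq0$, this forces $c+x^2=\mu c$ for some scalar $\mu\neq0$, hence $x^2=(\mu-1)c\in Fc$.

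The only delicate step is this case analysis with $a+x$. It requires care that no division by $2$ is used; the argument above does not need it. It also requires care with the degenerate case where $(a+x)^2=0$, and that case is handled via Lemma~\ref{lem:ann-squarezero} exactly as above. Everything else is a direct application of Lemmas~\ref{lem:x3zero}, \ref{lem:ann-squarezero} and~\ref{lem:xN-line}.
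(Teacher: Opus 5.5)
Your proof is correct and follows essentially the same route as the paper. You pick $a$ with $a^2=c\neq0$ and reduce all products to squares via Lemma~\ref{lem:xN-line} and Lemma~\ref{lem:ann-squarezero}. You then compare an arbitrary non-zero square $x^2$ with $c$, using $a$ itself when $ax\neq0$ and $a+x$ when $ax=0$.

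The only difference is cosmetic. The paper shows that $(a+x)^2=0$ cannot occur, since $(a+x)a=c\neq0$ would contradict Lemma~\ref{lem:ann-squarezero}. You instead note that in that case $x^2=-c\in Fc$, which is equally valid. Your closing remark attributing this case to Lemma~\ref{lem:ann-squarezero} is therefore inaccurate, though harmless.
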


\begin{proof}
Assume $N^2\neq0$. Since an associative algebra with $x^2=0$ for all
$x$ may still have non-zero products in characteristic $2$, we first
show that some square is non-zero. If $x^2=0$ for every $x\in N$, then
Lemma~\ref{lem:ann-squarezero} gives $N=\operatorname{Ann}(N)$, and
hence $N^2=0$, a contradiction. Thus choose $a\in N$ with
\[
a^2=c\neq0.
\]
By Lemma~\ref{lem:xN-line}, $aN\subseteq Fc$.

Let $y\in N$. If $y^2=0$, then
$y\in\operatorname{Ann}(N)$ and $ay=0$. Suppose $y^2\neq0$.
If $ay\neq0$, then
\[
ay\in Fa^2\cap Fy^2,
\]
where the second inclusion follows by applying
Lemma~\ref{lem:xN-line} to $y$. Hence $Fy^2=Fc$.

It remains to consider $y^2\neq0$ and $ay=0$. Put $x=a+y$. If
$x^2=0$, Lemma~\ref{lem:ann-squarezero} would give $xN=0$, whereas
\[
xa=a^2+ay=c\neq0.
\]
Thus $x^2\neq0$. Lemma~\ref{lem:xN-line} applied to $x$ gives
\[
c=xa\in Fx^2.
\]
Hence $Fx^2=Fc$. Since $ay=0$,
\[
x^2=(a+y)^2=c+y^2,
\]
so $y^2\in Fc$.

We have proved that every square in $N$ lies in $Fc$. For arbitrary
$u,v\in N$, if $u^2=0$ then $uv=0$ by
Lemma~\ref{lem:ann-squarezero}; if $u^2\neq0$, then
$uv\in Fu^2\subseteq Fc$ by Lemma~\ref{lem:xN-line}. Therefore
$N^2\subseteq Fc$. Since $c=a^2\in N^2$, we have $N^2=Fc$.
Finally, $c^2=a^4=0$, so
$c\in\operatorname{Ann}(N)$ and $N^3=N^2N=0$.
\end{proof}

We collect the associative part of the argument.

\begin{theorem}[Associative structure theorem]
\label{thm:assoc-structure}
Let $P$ be a Dedekind Poisson algebra over an arbitrary field. Then
\[
P(+,\cdot)=E\oplus N,
\]
where
\[
E=0
\quad\text{or}\quad
E=Fe,\qquad e^2=e,
\]
and, when $E\neq0$,
\[
eN=0.
\]
Moreover,
\[
N^3=0,
\qquad
\dim_F N^2\leq1,
\]
and
\[
\operatorname{Ann}(N)=\{x\in N\mid x^2=0\}.
\]
The non-zero idempotent $e$, when it exists, is unique.
\end{theorem}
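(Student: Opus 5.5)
This theorem is a collection of results already established in this section and the previous one, so the plan is to assemble them in order. First, Proposition~\ref{prop:idempotent-nil} gives the decomposition $P(+,\cdot)=E\oplus N$, with either $E=0$ or $E=Fe$ for a non-zero idempotent $e$ satisfying $eN=0$. It also gives that $N=\ker L_e$ (or $N=P$ when $E=0$) is a nil associative ideal. The uniqueness of the non-zero idempotent is exactly Lemma~\ref{lem:unique-idempotent}, and it requires no further argument.

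Next, for the structure of $N$, I will invoke Proposition~\ref{prop:N2}. It yields $N^2=0$ or $N^2=Fc$ with $c\in\operatorname{Ann}(N)$, so in all cases $\dim_F N^2\leq1$. For $N^3=0$, the case $N^2=0$ is trivial. In the other case Proposition~\ref{prop:N2} states $N^3=N^2N=cN=0$ directly.

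Finally, the description $\operatorname{Ann}(N)=\{x\in N\mid x^2=0\}$ is Lemma~\ref{lem:ann-squarezero}.

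One point needs checking: the lemmas of Section~\ref{sec:nilpart} use that subalgebras of $N$ are ideals of $N$, which is not immediate. Their proofs, however, use Proposition~\ref{prop:assoc-subalgebra-ideal} applied in $P$. That proposition shows that an associative subalgebra $B\subseteq N$ satisfies $PB\subseteq B$, hence in particular $NB\subseteq B$. So all the cited statements are valid for $N$ as defined, regardless of whether $E$ is zero.

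There is no genuine obstacle here. The only care needed is this consistency check that the nil-part lemmas were proved for the same $N$ produced by Proposition~\ref{prop:idempotent-nil}. Nothing in them uses finite-dimensionality or division by $2$, so the theorem holds over an arbitrary field.
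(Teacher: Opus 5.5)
Your proposal is correct and follows the paper's proof exactly: it assembles Proposition~\ref{prop:idempotent-nil}, Lemma~\ref{lem:unique-idempotent}, Lemma~\ref{lem:ann-squarezero} and Proposition~\ref{prop:N2} in the same way. Your extra check is sound and is left implicit in the paper. The nil-part lemmas are valid for this $N$ because they rely on Proposition~\ref{prop:assoc-subalgebra-ideal} applied in $P$, which gives $NB\subseteq PB\subseteq B$.
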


\begin{proof}
The decomposition and uniqueness of $e$ follow from
Proposition~\ref{prop:idempotent-nil} and
Lemma~\ref{lem:unique-idempotent}. Lemma~\ref{lem:ann-squarezero} gives
the description of the annihilator, and
Proposition~\ref{prop:N2} gives $N^3=0$ and
$\dim_FN^2\leq1$.
\end{proof}

\section{The Lie multiplication and the canonical form}
\label{sec:lie}

We next determine how much Lie multiplication can survive the
associative restrictions obtained above.

We first recall a useful fact valid for every Poisson algebra.

\begin{lemma}
\label{lem:idempotent-central}
Every idempotent of $P(+,\cdot)$ belongs to
$\zeta(P(+,[\, ,\,]))$.
\end{lemma}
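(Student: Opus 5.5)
The plan is to use the Leibniz identity with the idempotent $e$ in both associative slots. This is valid in every Poisson algebra: no Dedekind hypothesis is needed and we never divide by $2$. Let $e^2=e$ and $x\in P$, and put $d=[e,x]$. By the Leibniz identity, $[e,x]=[ee,x]=e[e,x]+e[e,x]=2ed$. This suggests extracting information from how $d$ interacts with multiplication by $e$.

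The first step is to multiply $d=2ed$ by $e$. This gives $ed=2e^2d=2ed$, so $ed=0$. Substituting back yields $d=2ed=0$. Hence $[e,x]=0$ for all $x$, which is exactly $e\in\zeta(P(+,[\, ,\,]))$.

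The only point needing care is characteristic $2$, since there the identity $d=2ed$ simply reads $d=0$ outright. So both cases are covered at once, uniformly: in characteristic $2$ the conclusion is immediate from $d=2ed$, and otherwise the two-line computation above applies. I do not expect any genuine obstacle. The one thing to check is that the Leibniz identity $[xy,z]=x[y,z]+y[x,z]$ is applied with $x=y=e$ and $z=x$, so that both terms are $e[e,x]$. No alternation is used beyond the definition.
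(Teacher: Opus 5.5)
Your proposal is correct and is essentially the paper's own proof: apply the Leibniz identity to $[e^2,x]$ to get $d=2ed$, multiply by $e$ to obtain $ed=0$, and conclude $d=0$. As you observe, this computation never divides by $2$, so it covers all characteristics uniformly; the paper's separate characteristic-$2$ remark, like yours, is not actually needed.
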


\begin{proof}
Let $e^2=e$ and $x\in P$. Put $d=[e,x]$. The Leibniz identity gives
\[
d=[e^2,x]=2e[e,x]=2ed.
\]
Multiplying by $e$ yields $ed=2ed$, and therefore $ed=0$. Hence
$d=2ed=0$. In characteristic $2$, the equality $d=2ed$ already gives
$d=0$.
\end{proof}

\begin{proposition}
\label{prop:derived-in-N2}
Let
\[
P(+,\cdot)=E\oplus N
\]
be the decomposition of Theorem~\ref{thm:assoc-structure}. Then $N$ is
a Poisson ideal and
\[
[P,P]=[N,N]\subseteq N^2.
\]
\end{proposition}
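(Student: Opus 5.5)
We must show three things: $N$ is a Poisson ideal, $[P,P]=[N,N]$, and $[N,N]\subseteq N^2$.

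\textbf{Step 1: the Lie bracket is carried by $N$.} If $E=0$, then $N=P$ and the equality $[P,P]=[N,N]$ is trivial. Otherwise $E=Fe$, and Lemma~\ref{lem:idempotent-central} shows that $e$ is Lie-central. Writing $x=\lambda e+n$ and $y=\mu e+m$ with $n,m\in N$, bilinearity gives
\[
[x,y]=[n,m].
\]
Hence $[P,P]=[N,N]$. The same computation gives $[P,N]=[N,N]$. So $N$ is a Poisson ideal once we know that $[N,N]\subseteq N$; it is already an associative ideal by Proposition~\ref{prop:idempotent-nil}.

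\textbf{Step 2: $[N,N]\subseteq P^2$.} This is Proposition~\ref{prop:derived-in-square}, combined with Step 1. Since $eN=0$, we have
\[
P^2=Fe\oplus N^2.
\]
So for $n,m\in N$ we can write
\[
[n,m]=\lambda e+w,\qquad w\in N^2.
\]

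\textbf{Step 3: remove the $e$-component.} Multiply by $e$ and use the Leibniz identity together with $en=em=0$:
\[
e[n,m]=[en,m]-n[e,m]=0-0=0,
\]
because $[e,m]=0$ by Lemma~\ref{lem:idempotent-central}. On the other hand,
\[
e(\lambda e+w)=\lambda e,
\]
since $eN^2\subseteq eN=0$. Therefore $\lambda=0$, and so
\[
[n,m]=w\in N^2\subseteq N.
\]
This gives both $[N,N]\subseteq N^2$ and $[P,N]\subseteq N$, which completes the proof.

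The only real obstacle is Step 3, excluding an $e$-component in $[N,N]$. The Leibniz identity with the central idempotent settles it. Note that it divides by nothing, so it is valid in every characteristic, including characteristic $2$.
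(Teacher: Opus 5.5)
Your proof is correct and follows essentially the same route as the paper: centrality of $e$ (Lemma~\ref{lem:idempotent-central}) gives $[P,P]=[N,N]$, the Leibniz identity with $en=0$ gives $e[n,m]=0$, and together with $[P,P]\subseteq P^2=Fe\oplus N^2$ this forces $[N,N]\subseteq N^2$. The only difference is cosmetic: the paper reads $e[n,p]=0$ as $[n,p]\in\ker L_e=N$ and then intersects $N$ with $P^2$, whereas you remove the $e$-coefficient directly; the case $E=0$, which you leave implicit in Steps 2--3, is immediate because then $P^2=N^2$.
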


\begin{proof}
If $E=0$, then $N=P$. Suppose $E=Fe$. By
Lemma~\ref{lem:idempotent-central}, $[e,P]=0$. Since $N=\ker L_e$,
for $n\in N$ and $p\in P$ the Leibniz identity gives
\[
0=[en,p]=e[n,p]+n[e,p]=e[n,p].
\]
Thus $[n,p]\in N$, and $N$ is a Poisson ideal. Moreover
$[P,P]=[N,N]$ because $e$ is Lie-central.

By Proposition~\ref{prop:derived-in-square},
$[P,P]\subseteq P^2$. If $E=Fe$, then
\[
P^2=Fe\oplus N^2,
\]
while $[N,N]\subseteq N$. Hence
\[
[N,N]\subseteq N\cap P^2=N^2.
\]
The case $E=0$ is immediate because $P=N$.
\end{proof}

\begin{corollary}
\label{cor:N2zero-Liezero}
If $N^2=0$, then $[P,P]=0$.
\end{corollary}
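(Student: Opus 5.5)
This follows immediately from Proposition~\ref{prop:derived-in-N2}, so the plan is to specialize that result to $N^2=0$. First I recall the decomposition $P(+,\cdot)=E\oplus N$ of Theorem~\ref{thm:assoc-structure}, which holds for any Dedekind Poisson algebra. Proposition~\ref{prop:derived-in-N2} then gives
\[
[P,P]=[N,N]\subseteq N^2 .
\]
Substituting the hypothesis $N^2=0$ yields $[P,P]\subseteq 0$, that is, $[P,P]=0$.

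The two cases $E=0$ and $E=Fe$ need no separate treatment here. Both are already covered inside the proof of Proposition~\ref{prop:derived-in-N2}. That proof uses Lemma~\ref{lem:idempotent-central} to dispose of the idempotent $e$, and then Proposition~\ref{prop:derived-in-square} together with $P^2=Fe\oplus N^2$ to push $[N,N]$ into $N^2$.

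As a sanity check, in the case $E=0$ the statement reduces to Corollary~\ref{cor:p2zero}, since then $P=N$ and $P^2=N^2=0$. There is no real obstacle in this argument. The only point to watch is that the corollary must be read for a Dedekind Poisson algebra $P$, with $N$ the nil part from Theorem~\ref{thm:assoc-structure}, because the earlier results are stated only under that hypothesis.
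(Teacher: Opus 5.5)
Your proposal is correct and matches the paper: the corollary is stated without separate proof as the immediate specialization of Proposition~\ref{prop:derived-in-N2}, $[P,P]=[N,N]\subseteq N^2$, to the case $N^2=0$. Your remark that $P$ must be read as Dedekind, with $N$ the nil part from Theorem~\ref{thm:assoc-structure}, is exactly the intended context.
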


Assume now that $N^2=Fc\neq0$. The entire associative annihilator of
$N$ becomes Lie-central.

\begin{lemma}
\label{lem:c-central}
If $N^2=Fc\neq0$, then
\[
Fc\subseteq\zeta(P(+,[\, ,\,])).
\]
\end{lemma}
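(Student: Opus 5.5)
The plan is to show directly that $[c,x]=0$ for every $x\in P$, by writing $c$ as a square and applying the Leibniz identity. The ingredients are Proposition~\ref{prop:N2} and Proposition~\ref{prop:derived-in-N2}.

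First, I would produce an element $a\in N$ with $a^2\neq0$. If every square in $N$ were zero, Lemma~\ref{lem:ann-squarezero} would give $N=\operatorname{Ann}(N)$ and hence $N^2=0$, contrary to the hypothesis. So choose $a\in N$ with $a^2\neq0$. Proposition~\ref{prop:N2} gives $N^2=Fa^2$, so $Fc=Fa^2$. It therefore suffices to prove that $a^2$ is Lie-central.

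Next, let $x\in P$. Applying the Leibniz identity with both associative factors equal to $a$ gives
\[
[a^2,x]=a[a,x]+a[a,x]=2a[a,x].
\]
By Proposition~\ref{prop:derived-in-N2}, $[a,x]\in[P,P]\subseteq N^2=Fc$, so $[a,x]=\lambda c$ for some $\lambda\in F$. Since $N^3=0$ by Proposition~\ref{prop:N2}, we have $ac\in N^3=0$. Hence
\[
[a^2,x]=2\lambda\,ac=0.
\]
This step uses neither division by $2$ nor finite-dimensionality, so it is valid in every characteristic. Consequently $[c,P]=0$, that is, $Fc\subseteq\zeta(P(+,[\, ,\,]))$.

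I do not expect any genuine obstacle here. The only point needing care is the first step: one must know that $N^2$ is spanned by an actual square, rather than merely by some product. This is exactly the characteristic-$2$ subtlety that Proposition~\ref{prop:N2} already handles. The remaining argument is the one-line Leibniz computation together with $[P,P]\subseteq N^2$ and $N^3=0$.
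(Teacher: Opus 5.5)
Your proposal is correct and follows essentially the same argument as the paper: choose $a\in N$ with $a^2\neq0$ so that $Fc=Fa^2$, then use $[a^2,x]=2a[a,x]$ together with $[a,x]\in Fc$ and $aFc\subseteq N^3=0$. The only difference is that you re-derive the existence of $a$ via Lemma~\ref{lem:ann-squarezero}, whereas the paper simply cites Proposition~\ref{prop:N2} for it.
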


\begin{proof}
By Proposition~\ref{prop:N2}, there exists $a\in N$ with $a^2\neq0$.
Write
\[
a^2=\mu c,
\qquad
\mu\in F^\times.
\]
For every $x\in P$, Proposition~\ref{prop:derived-in-N2} gives
$[a,x]\in Fc$. Since $cN=0$, we have $a[a,x]=0$, and therefore
\[
\mu[c,x]=[a^2,x]=2a[a,x]=0.
\]
As $\mu\neq0$, it follows that $[c,x]=0$ for every $x\in P$.
No division by $2$ is involved, so the argument also applies in
characteristic $2$.
\end{proof}

\begin{lemma}[Central annihilator lemma]
\label{lem:ann-central}
If $N^2=Fc\neq0$, then
\[
\operatorname{Ann}(N)
 \subseteq\zeta(P(+,[\, ,\,])).
\]
\end{lemma}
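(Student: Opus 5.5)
The plan is to reduce the statement to brackets with elements of $N$, and then play two facts against each other. First, every bracket lands in the line $Fc$. Second, the cyclic subalgebra of an annihilator element is itself a line, and by the Dedekind condition it is a Poisson ideal.

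Fix $z\in\operatorname{Ann}(N)$. By Lemma~\ref{lem:ann-squarezero}, $z^2=0$, so Proposition~\ref{prop:one-generated} gives $\langle z\rangle_P=Fz$. By the cyclic criterion (Lemma~\ref{lem:cyclic-criterion}), $Fz$ is a Poisson ideal of $P$, and therefore $[z,x]\in Fz$ for every $x\in P$. On the other hand, Proposition~\ref{prop:derived-in-N2} gives $[z,x]\in[P,P]\subseteq N^2=Fc$. Consequently
\[
[z,x]\in Fz\cap Fc \qquad (x\in P).
\]
Note that this already covers brackets with $e$ when $E=Fe$, although those also vanish directly by Lemma~\ref{lem:idempotent-central}.

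We then split into two cases according to whether $z\in Fc$.
\begin{itemize}
\item If $z\notin Fc$, then $z\neq0$ and $Fz\cap Fc=0$, so $[z,x]=0$ for all $x\in P$.
\item If $z\in Fc$, then $z$ is Lie-central by Lemma~\ref{lem:c-central}.
\end{itemize}
In either case $z\in\zeta(P(+,[\,,\,]))$.

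There is no real obstacle here. The only point needing care is the degenerate case $z\in Fc$, where the intersection argument gives no information; this is exactly where Lemma~\ref{lem:c-central} is needed. No division by $2$ occurs anywhere, so the argument holds in every characteristic.
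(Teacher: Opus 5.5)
Your proposal is correct and follows the paper's proof essentially step for step. You use $z^2=0$ to get $\langle z\rangle_P=Fz$, trap $[P,z]$ in $Fz\cap Fc$ via the cyclic criterion and Proposition~\ref{prop:derived-in-N2}, and handle the degenerate case $z\in Fc$ with Lemma~\ref{lem:c-central}, exactly as the paper does.
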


\begin{proof}
Let $z\in\operatorname{Ann}(N)$. Then $z^2=0$ by
Lemma~\ref{lem:ann-squarezero}, and hence
\[
\langle z\rangle_P=Fz
\]
by Proposition~\ref{prop:one-generated}. The cyclic criterion implies
that $Fz$ is a Poisson ideal, so
\[
[P,z]\subseteq Fz.
\]
On the other hand, Proposition~\ref{prop:derived-in-N2} gives
\[
[P,z]\subseteq[P,P]\subseteq Fc.
\]
If $z\notin Fc$, then $Fz\cap Fc=0$ and $[P,z]=0$. If $z\in Fc$, the
same conclusion follows from Lemma~\ref{lem:c-central}.
\end{proof}

We now introduce the bilinear forms which encode the remaining
structure. Recall that a bilinear form $\beta:V\times V\to F$ is
\emph{symmetric} if $\beta(u,v)=\beta(v,u)$, and an alternating
bilinear form $\omega$ satisfies $\omega(v,v)=0$ for every $v\in V$.
For a symmetric form we shall say that the associated quadratic map
$v\mapsto\beta(v,v)$ is \emph{anisotropic} when
\[
\beta(v,v)\neq0
\qquad\text{for every }0\neq v\in V.
\]
We shall always display this explicit condition, so no theory of
quadratic forms is required.

\begin{proposition}[Canonical form]
\label{prop:canonical-form}
Let $P$ be Dedekind and suppose $N^2=Fc\neq0$. There exist vector
subspaces $V,Z\leq N$ such that
\[
N=V\oplus Z\oplus Fc,
\qquad
\operatorname{Ann}(N)=Z\oplus Fc.
\]
There are bilinear forms
\[
\beta:V\times V\longrightarrow F,
\qquad
\omega:V\times V\longrightarrow F
\]
such that
\[
uv=\beta(u,v)c,
\qquad
[u,v]=\omega(u,v)c
\qquad(u,v\in V).
\]
The form $\beta$ is symmetric and satisfies
\[
\beta(v,v)\neq0
\qquad(0\neq v\in V),
\]
while $\omega$ is alternating. Moreover,
\[
ZP=Fc\,P=0
\]
and
\[
[Z,P]=[c,P]=0.
\]
\end{proposition}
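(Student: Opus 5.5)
The plan is to construct the decomposition by linear algebra and then read the two forms off the earlier structural results.

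First, Lemma~\ref{lem:ann-squarezero} shows that $\operatorname{Ann}(N)$ is a subspace of $N$, and Proposition~\ref{prop:N2} shows that it contains $c$. I would choose a complement $Z$ of $Fc$ inside $\operatorname{Ann}(N)$, so that $\operatorname{Ann}(N)=Z\oplus Fc$. Then I would choose a complement $V$ of $\operatorname{Ann}(N)$ in $N$, giving $N=V\oplus Z\oplus Fc$. Both choices only need the existence of vector-space complements, which is available without any finite-dimensionality assumption.

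Next I define the forms. Proposition~\ref{prop:N2} gives $N^2=Fc$, so for $u,v\in V$ there is a unique scalar $\beta(u,v)$ with $uv=\beta(u,v)c$. Proposition~\ref{prop:derived-in-N2} gives $[N,N]\subseteq N^2=Fc$, so there is likewise a unique scalar $\omega(u,v)$ with $[u,v]=\omega(u,v)c$. Uniqueness uses $c\neq0$, and bilinearity follows from bilinearity of the two products together with this uniqueness. Commutativity of the associative product makes $\beta$ symmetric, and the alternating axiom $[v,v]=0$ makes $\omega(v,v)=0$ for all $v$. This is genuine alternation, not just skew-symmetry, which matters in characteristic~$2$.

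The key step is anisotropy. Take $0\neq v\in V$. Since $V\cap\operatorname{Ann}(N)=0$, we have $v\notin\operatorname{Ann}(N)$. Lemma~\ref{lem:ann-squarezero} then gives $v^2\neq0$, that is, $\beta(v,v)c\neq0$, so $\beta(v,v)\neq0$. This is exactly where the characterization $\operatorname{Ann}(N)=\{x\in N: x^2=0\}$ is essential. I expect this to be the only substantive point; the remaining steps are bookkeeping.

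It remains to prove the vanishing statements. For $z\in Z\oplus Fc=\operatorname{Ann}(N)$ we have $zN=0$ by definition. If $E=Fe$, then $eN=0$ by Theorem~\ref{thm:assoc-structure}, so $ze=0$. Since $P=E\oplus N$, this gives $ZP=Fc\,P=0$. For the Lie part, Lemma~\ref{lem:ann-central} shows that $\operatorname{Ann}(N)\subseteq\zeta(P(+,[\, ,\,]))$, hence $[Z,P]=[c,P]=0$. The statement $[c,P]=0$ is also given directly by Lemma~\ref{lem:c-central}.
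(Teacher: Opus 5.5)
Your proposal is correct and follows essentially the same route as the paper. You choose complements $Z$ of $Fc$ in $\operatorname{Ann}(N)$ and $V$ of $\operatorname{Ann}(N)$ in $N$, read off $\beta$ and $\omega$ from $N^2=Fc$ and $[N,N]\subseteq Fc$, and obtain anisotropy from $\operatorname{Ann}(N)=\{x\in N: x^2=0\}$. The vanishing statements come from $eN=0$ and the central annihilator lemma.
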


\begin{proof}
Choose a vector-space complement $Z$ of $Fc$ in
$\operatorname{Ann}(N)$, and then choose a complement $V$ of
$\operatorname{Ann}(N)$ in $N$. Thus
\[
N=V\oplus Z\oplus Fc.
\]
Since $N^2=Fc$, for $u,v\in V$ there is a unique scalar
$\beta(u,v)$ such that
\[
uv=\beta(u,v)c.
\]
Bilinearity and commutativity of the associative multiplication show
that $\beta$ is a symmetric bilinear form. If $0\neq v\in V$ and
$\beta(v,v)=0$, then $v^2=0$, so
$v\in\operatorname{Ann}(N)$ by Lemma~\ref{lem:ann-squarezero}, contrary
to the choice of $V$. Thus $\beta(v,v)\neq0$ for non-zero $v$.

By Proposition~\ref{prop:derived-in-N2}, $[P,P]\subseteq Fc$. Hence
there is a bilinear form $\omega$ on $V$ such that
\[
[u,v]=\omega(u,v)c.
\]
It is alternating because the Lie multiplication is alternating.
Finally, $Z\oplus Fc=\operatorname{Ann}(N)$, and
Lemma~\ref{lem:ann-central} gives the asserted vanishing of products
and brackets involving $Z$ or $c$. The products with $e$, when $e$
exists, are zero by Theorem~\ref{thm:assoc-structure}, and the brackets
with $e$ are zero by Lemma~\ref{lem:idempotent-central}.
\end{proof}

\begin{remark}
\label{rem:no-compatibility}
No additional compatibility relation between $\beta$ and $\omega$ is
hidden in Proposition~\ref{prop:canonical-form}. Indeed, if both forms
take values in the line $Fc$, with $cP=0$ and $[c,P]=0$, then for
$u,v,w\in V$,
\[
[uv,w]=0
\]
and
\[
u[v,w]+v[u,w]=0.
\]
Thus the Leibniz identity is automatic. The Jacobi identity is likewise
automatic because $[V,V]\subseteq Fc$ and $[c,P]=0$.
\end{remark}

\section{Classification of Dedekind Poisson algebras}
\label{sec:classification}

We can now state the main theorem. In the multiplication tables below,
all products and brackets not explicitly displayed or forced by
bilinearity, commutativity of the associative multiplication, and
alternation of the Lie multiplication are understood to be zero.

\begin{theorem}[Main classification theorem]
\label{thm:main}
Let $P$ be a Poisson algebra over an arbitrary field $F$. Then $P$ is a
Dedekind Poisson algebra if and only if it is of one of the following
two types.

\medskip
\noindent\textup{\rm(I)}
\[
P=E\oplus Z,
\]
where the decomposition is a direct sum of vector spaces and $Z$ is a
zero Poisson algebra,
\[
E=0
\quad\text{or}\quad
E=Fe,
\qquad e^2=e,
\]
and all Lie brackets are zero.

\medskip
\noindent\textup{\rm(II)}
\[
P=E\oplus Z\oplus V\oplus Fc,
\qquad V\neq0,
\]
where the decomposition is a direct sum of vector spaces, $Fc$ is
one-dimensional with $c\neq0$, and
\[
E=0
\quad\text{or}\quad
E=Fe,
\qquad e^2=e,
\]
and there exist a symmetric bilinear form
\[
\beta:V\times V\to F
\]
and an alternating bilinear form
\[
\omega:V\times V\to F
\]
such that
\[
\beta(v,v)\neq0
\qquad\text{for every }0\neq v\in V,
\]
and the only possibly non-zero operations besides $e^2=e$ are
\[
uv=\beta(u,v)c,
\qquad
[u,v]=\omega(u,v)c
\qquad(u,v\in V).
\]
In particular,
\[
ZP=Fc\,P=0,
\qquad
[Z,P]=[c,P]=[e,P]=0.
\]
Conversely, every algebra of type \textup{\rm(I)} or \textup{\rm(II)}
is a Dedekind Poisson algebra.
\end{theorem}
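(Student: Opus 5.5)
The proof has two directions. The forward direction assembles earlier results; the converse is a direct check via the cyclic criterion.

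\emph{Forward direction.} Let $P$ be Dedekind and take the decomposition $P(+,\cdot)=E\oplus N$ of Theorem~\ref{thm:assoc-structure}. By Proposition~\ref{prop:N2}, either $N^2=0$ or $N^2=Fc\neq0$.

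If $N^2=0$, put $Z=N$. Corollary~\ref{cor:N2zero-Liezero} gives $[P,P]=0$, and $eN=0$. So $N$ is a zero Poisson algebra, and $P$ is of type (I).

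If $N^2=Fc\neq0$, Proposition~\ref{prop:canonical-form} gives $N=V\oplus Z\oplus Fc$ together with the forms $\beta,\omega$ and all the stated vanishing relations. Products and brackets with $e$ vanish by $eN=0$ and Lemma~\ref{lem:idempotent-central}. It remains to check $V\neq0$. If $V=0$, then $N=\operatorname{Ann}(N)$, so $N^2=0$, a contradiction. Hence $P$ is of type (II).

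\emph{Converse.} First I would verify that each table really defines a Poisson algebra. Associativity is clear because every product of three elements is zero, except $e^3=e$ in $Fe$ itself. Leibniz and Jacobi hold by Remark~\ref{rem:no-compatibility}, together with $e$ and $c$ being central and annihilating everything else.

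Then, by Lemma~\ref{lem:cyclic-criterion}, it suffices to show that $\langle a\rangle_P=\operatorname{span}\{a,a^2,\dots\}$ is a Poisson ideal for every $a$. Write $a=\lambda e+v+z+\gamma c$ with $v\in V$, $z\in Z$. Then
\[
a^2=\lambda^2e+\beta(v,v)c,
\]
and $a^k=\lambda^k e$ for $k\geq3$. I would compute
\[
Pa=\lambda Fe+\beta(V,v)c,
\qquad
[P,a]=\omega(V,v)c\subseteq Fc.
\]
The cases are:
\begin{itemize}
\item If $v=0$ and $\lambda=0$: $Pa=0$ and $[P,a]=0$, so the line $Fa$ is an ideal.
\item If $v=0$ and $\lambda\ne0$: $\langle a\rangle_P=Fa$ and $Pa\subseteq Fe$. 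Since $e=\lambda^{-1}a-\lambda^{-1}(z+\gamma c)$, the element $e$ lies in $\langle a\rangle_P$ only if $z+\gamma c=0$. However $Pa=\lambda Fe$ here, and $a^2=\lambda a$ forces $a=\lambda e$ because $(z+\gamma c)^2=0$. Concretely, $a^2=\lambda^2 e$ and $Fa\ni a^2$ together imply $z+\gamma c=0$. So $\langle a\rangle_P=Fe$, which is an ideal.
\item If $v\neq0$: the anisotropy condition gives $\beta(v,v)\neq0$, so $a^2-\lambda^2 e\in Fc^\times$. When $\lambda\ne0$, also $e\in\operatorname{span}\{a^2,a^3\}$. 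Hence $c\in\langle a\rangle_P$, and if $\lambda\neq0$ also $e\in\langle a\rangle_P$. Therefore $Pa$ and $[P,a]$ lie in $Fe+Fc$, where the $Fe$ part occurs only when $\lambda\neq0$, and so both lie in $\langle a\rangle_P$.
\end{itemize}

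The main obstacle is this converse case analysis: keeping track of exactly when $e$ and $c$ belong to $\langle a\rangle_P$. Anisotropy of $\beta$ is precisely what puts $c$ into $\langle a\rangle_P$ whenever $v\neq0$; without it, an element $v$ with $v^2=0$ but $Pv\neq0$ would break the ideal property.
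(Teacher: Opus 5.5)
Your forward direction matches the paper's. Your explicit check that $V\neq0$ (otherwise $N=\operatorname{Ann}(N)$ and so $N^2=0$) covers a point the paper leaves implicit. The converse via the cyclic criterion also follows the paper's route.

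\textbf{The error is in the case $v=0$, $\lambda\neq0$.} Here $a=\lambda e+n$ with $n=z+\gamma c$, and $a^2=\lambda^2e$. This is a multiple of $a$ only when $n=0$. So your claim $\langle a\rangle_P=Fa$ is false in general: in fact $\langle a\rangle_P=Fa+Fe=Fe+Fn$, which is two-dimensional when $n\neq0$. Your next step uses this false claim to conclude that $n=0$. That restricts the arbitrary element $a$ instead of treating it. Elements such as $a=e+z$ with $0\neq z\in Z$ are therefore never handled, and your stated conclusion $\langle a\rangle_P=Fe$ is wrong for them. The same confusion produces your remark that $e$ lies in $\langle a\rangle_P$ only if $z+\gamma c=0$. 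In fact $e=\lambda^{-2}a^2$ always lies there when $\lambda\neq0$.

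\textbf{The repair needs only your own formulas.} Whenever $\lambda\neq0$ we have $e=\lambda^{-3}a^3\in\langle a\rangle_P$. When $v=0$ your computations give $Pa\subseteq Fe$ and $[P,a]=0$. Hence $\langle a\rangle_P$ is an ideal. Equivalently, $\langle a\rangle_P=Fe+Fn$ is a sum of the Poisson ideals $Fe$ and $Fn$, which is how the paper treats this case.

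The split on $v$ is in fact unnecessary. You have $Pa\subseteq\lambda Fe+\beta(V,v)c$ and $[P,a]\subseteq\omega(V,v)c$. You also have $e\in\langle a\rangle_P$ whenever $\lambda\neq0$, and $c\in\langle a\rangle_P$ whenever $v\neq0$ (by anisotropy). Together these give $Pa+[P,a]\subseteq\langle a\rangle_P$ uniformly for every $a$.
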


\begin{proof}
Let first $P$ be Dedekind. By Theorem~\ref{thm:assoc-structure},
\[
P(+,\cdot)=E\oplus N,
\]
where $E=0$ or $E=Fe$, $e^2=e$, $eN=0$, and
$\dim_FN^2\leq1$.

If $N^2=0$, Corollary~\ref{cor:N2zero-Liezero} gives $[P,P]=0$.
Taking $Z=N$ yields type~\textup{\rm(I)}.

Suppose $N^2\neq0$. Write $N^2=Fc$. Proposition~\ref{prop:canonical-form}
provides
\[
N=V\oplus Z\oplus Fc
\]
and the required forms $\beta$ and $\omega$. Thus $P$ is of
type~\textup{\rm(II)}.

Conversely, let $P$ be an algebra of type~\textup{\rm(I)} or
\textup{\rm(II)}. The displayed multiplication is commutative and
associative. In type~\textup{\rm(II)}, the Jacobi and Leibniz identities
follow from Remark~\ref{rem:no-compatibility}; type~\textup{\rm(I)} is
immediate. Hence $P$ is a Poisson algebra.

By Lemma~\ref{lem:cyclic-criterion}, it suffices to prove that every
cyclic Poisson subalgebra is an ideal. In type~\textup{\rm(I)}, let
$x=\alpha e+z$ (with the $e$-term omitted when $E=0$). If $\alpha=0$,
then $\langle x\rangle_P=Fx\subseteq Z$ is an ideal. If
$\alpha\neq0$, then $x^2=\alpha^2e$, so
$e,z\in\langle x\rangle_P$ and
$\langle x\rangle_P=Fe+Fz$, again an ideal. Thus type~\textup{\rm(I)}
is Dedekind.

Now consider type~\textup{\rm(II)} and write
\[
x=\alpha e+z+v+\gamma c,
\]
where the term $\alpha e$ is omitted when $E=0$, $z\in Z$, $v\in V$,
and $\gamma\in F$.

Suppose first that $\alpha=0$. If $v=0$, then $x$ lies in the central
associative annihilator $Z\oplus Fc$, so
\[
\langle x\rangle_P=Fx
\]
is a Poisson ideal. If $v\neq0$, then
\[
x^2=\beta(v,v)c\neq0,
\qquad
x^3=0.
\]
Therefore
\[
\langle x\rangle_P=Fx+Fc.
\]
For every $p\in P$, both $px$ and $[p,x]$ lie in $Fc$, while
$Pc=[P,c]=0$. Hence $\langle x\rangle_P$ is a Poisson ideal.

Now let $\alpha\neq0$. Put
\[
n=z+v+\gamma c,
\qquad
x=\alpha e+n.
\]
Since $en=0$ and $n^3=0$,
\[
x^3=\alpha^3e.
\]
Thus $e\in\langle x\rangle_P$ and consequently
$n=x-\alpha e\in\langle x\rangle_P$. If $v=0$, then $n$ is in the
central annihilator and
\[
\langle x\rangle_P=Fe+Fn,
\]
a sum of Poisson ideals. If $v\neq0$, then
\[
n^2=\beta(v,v)c\neq0,
\]
so $c\in\langle x\rangle_P$ and
\[
\langle x\rangle_P=Fe+Fn+Fc.
\]
Again $Pn,[P,n]\subseteq Fc$, while $Fe$ and $Fc$ are Poisson ideals.
Hence $\langle x\rangle_P$ is a Poisson ideal. The cyclic criterion now
shows that $P$ is Dedekind.
\end{proof}

\begin{remark}
The two types in Theorem~\ref{thm:main} are disjoint up to isomorphism.
Indeed, the intrinsic ideal $N$ consisting of the associative
nilpotent elements satisfies $N^2=0$ in type~\textup{\rm(I)} and
$N^2=Fc\neq0$ in type~\textup{\rm(II)}.
\end{remark}

\begin{corollary}
\label{cor:derived-small}
For every Dedekind Poisson algebra $P$,
\[
\dim_F[P,P]\leq1
\]
and
\[
[P,P]\subseteq
P^2\cap\operatorname{Ann}(P(+,\cdot))
\cap\zeta(P(+,[\, ,\,])).
\]
In particular,
\[
[[P,P],P]=0,
\]
so the associated Lie algebra is nilpotent of class at most $2$.
\end{corollary}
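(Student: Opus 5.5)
The plan is to read Corollary~\ref{cor:derived-small} directly off the classification in Theorem~\ref{thm:main} and to split into the two types. In type~\textup{\rm(I)} all brackets vanish, so $[P,P]=0$. Then $\dim_F[P,P]=0\leq1$, the inclusion into the triple intersection is trivial, and $[[P,P],P]=0$ holds at once.

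In type~\textup{\rm(II)}, the only possibly non-zero brackets are $[u,v]=\omega(u,v)c$ for $u,v\in V$. Every other bracket vanishes, because $[Z,P]=[c,P]=[e,P]=0$ and by bilinearity. It follows that $[P,P]\subseteq Fc$, which gives $\dim_F[P,P]\leq1$. To get the displayed inclusion, it suffices to check that $c$ lies in each of the three subspaces:
\begin{itemize}
\item $c\in P^2$, since $c=\beta(v,v)^{-1}v^2$ for any non-zero $v\in V$, and $V\neq0$ with $\beta(v,v)\neq0$;
\item $c\in\operatorname{Ann}(P(+,\cdot))$, since $Fc\,P=0$;
\item $c\in\zeta(P(+,[\, ,\,]))$, since $[c,P]=0$.
\end{itemize}
Since all three are subspaces, $Fc$ and hence $[P,P]$ lie in their intersection.

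For the final claim, $[P,P]\subseteq\zeta(P(+,[\, ,\,]))$ gives $[[P,P],P]=0$ by definition of the centre. This is exactly nilpotency of class at most $2$.

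Alternatively, one can avoid the classification and use Proposition~\ref{prop:derived-in-N2} together with Proposition~\ref{prop:N2}: these give $[P,P]\subseteq N^2$ with $\dim N^2\leq1$. Lemma~\ref{lem:c-central}, together with $c\in\operatorname{Ann}(N)$ and $eN=0$, then supplies the centrality and annihilator properties. There is no real obstacle here. The only point needing care is the membership $c\in P^2$ in type~\textup{\rm(II)}, which uses $V\neq0$ and anisotropy of $\beta$.
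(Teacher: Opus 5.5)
Your proposal is correct and follows the paper's proof: split by Theorem~\ref{thm:main} into types (I) and (II), note that $[P,P]=0$ in type (I) and $[P,P]\subseteq Fc$ in type (II), and check that $Fc$ lies in $P^2$, in $\operatorname{Ann}(P(+,\cdot))$ and in $\zeta(P(+,[\, ,\,]))$. Your explicit justification that $c\in P^2$, using $V\neq0$ and $\beta(v,v)\neq0$, spells out a step the paper leaves implicit.
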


\begin{proof}
In type~\textup{\rm(I)}, $[P,P]=0$. In type~\textup{\rm(II)},
$[P,P]\subseteq Fc$, and $Fc$ lies simultaneously in $P^2$, the
associative annihilator, and the Lie center.
\end{proof}

\begin{corollary}[Unital case]
\label{cor:unital}
Every non-zero unital Dedekind Poisson algebra is isomorphic to $F$,
with its ordinary associative multiplication and zero Lie
multiplication.
\end{corollary}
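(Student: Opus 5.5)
We prove Corollary~\ref{cor:unital} directly from Theorem~\ref{thm:main}, using the following observation. If $1$ is an identity of $P(+,\cdot)$, then $P^2=P$, since $x=1\cdot x$ for every $x\in P$. So let $P\neq0$ be a unital Dedekind Poisson algebra with identity $1$.

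By Theorem~\ref{thm:main}, $P$ is of type (I) or type (II).

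\textbf{Type (I).} Here $P=E\oplus Z$ and $P^2=E^2=E$. From $P^2=P$ we get $Z=0$. Since $P\neq0$, this forces $E=Fe$, so $P=Fe$ with $e^2=e$ and all brackets zero.

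\textbf{Type (II).} Here $P^2\subseteq E+Fc$. Indeed, every product involving $Z$, $c$, or a cross term with $e$ vanishes, and $VV\subseteq Fc$. Since $V\neq0$, the space $E+Fc$ is a proper subspace of $P$. This contradicts $P^2=P$, so type (II) cannot occur.

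A more intrinsic alternative is available. By Theorem~\ref{thm:assoc-structure}, the identity is an idempotent. Lemma~\ref{lem:unique-idempotent} then gives $1=e$. Hence $N=\ker L_e=\{x: 1\cdot x=0\}=0$, and so $P=Fe$.

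It remains to identify $Fe$ with $F$. The linear map $F\to Fe$, $\lambda\mapsto\lambda e$, is an isomorphism of associative algebras. The Lie bracket on $Fe$ is zero by alternation, or alternatively by Lemma~\ref{lem:idempotent-central}. Hence $P\cong F$ with its ordinary multiplication and zero Lie multiplication.

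We also check that this algebra qualifies. It is Dedekind, since it is of type (I) with $Z=0$. It is unital, with identity $e$. No step poses a real obstacle. The only point needing care is that the type (II) multiplication table never produces $V$ inside $P^2$; this is what rules out type (II) when $P^2=P$.
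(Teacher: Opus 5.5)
Your proof is correct, but it takes a different route from the paper. The paper does not use the classification at all. Since $1^n=1$, Proposition~\ref{prop:one-generated} gives $\langle 1\rangle_P=F1$, and this is a Poisson ideal by the Dedekind condition. Hence $P=P\cdot 1\subseteq F1$, and alternation kills the bracket.

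You instead go through Theorem~\ref{thm:main}, using only the consequence $P^2=P$ of having an identity. You check that $P^2=E$ in type (I) and $P^2\subseteq E+Fc\subsetneq P$ in type (II). The two approaches buy different things:
\begin{itemize}
\item The paper's argument is two lines. It could sit right after the cyclic criterion and needs none of the associative analysis of Sections~\ref{sec:associative} and~\ref{sec:nilpart}.
\item Your argument depends on the whole classification. In return it proves slightly more: every non-zero Dedekind Poisson algebra with $P^2=P$ is isomorphic to $F$. Without finite-dimensionality, $P^2=P$ is a priori weaker than having an identity.
\end{itemize}

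Your ``intrinsic alternative'' is closer to the paper. Once you know $1=e$, the fact that $Fe$ is an ideal already gives $P=eP\subseteq Fe$, which is exactly the paper's argument. So the detour through $N=\ker L_e$ and Theorem~\ref{thm:assoc-structure} is unnecessary. Two small points of wording: $1$ being an idempotent needs no theorem, and you should note that $1\neq 0$ because $P\neq 0$ before applying Lemma~\ref{lem:unique-idempotent}.
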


\begin{proof}
The one-generated Poisson subalgebra $F1=\langle1\rangle_P$ is a
Poisson ideal. Hence
\[
P=P1\subseteq F1,
\]
so $P=F1$. The Lie multiplication on a one-dimensional space is zero
by alternation.
\end{proof}

\section{Isomorphisms}
\label{sec:isomorphism}

We now determine when two algebras in Theorem~\ref{thm:main} are
isomorphic. All isomorphisms are understood to be $F$-linear.
Thus the isomorphism problem reduces to the simultaneous similarity
of the two bilinear forms, together with the dimension of the central
zero component and the presence or absence of the idempotent component.

The ideal $N$ is intrinsically determined: it is precisely the set of
associative nilpotent elements of $P$. In type~\textup{\rm(II)}, the
bilinear forms are intrinsically defined on
$N/\operatorname{Ann}(N)$ with values in the one-dimensional space
$N^2$. Choosing a non-zero generator $c$ of $N^2$ identifies these
$N^2$-valued forms with the $F$-valued forms $\beta$ and $\omega$.
Changing that generator is exactly the source of the common scalar in
the simultaneous-similarity relation below.

Two pairs of bilinear forms $(\beta,\omega)$ on $V$ and
$(\beta',\omega')$ on $V'$ will be called \emph{simultaneously similar}
if there exist a vector-space isomorphism
\[
\varphi:V\longrightarrow V'
\]
and a scalar $\lambda\in F^\times$ such that
\[
\beta'(\varphi u,\varphi v)=\lambda\beta(u,v)
\]
and
\[
\omega'(\varphi u,\varphi v)=\lambda\omega(u,v)
\]
for all $u,v\in V$.

\begin{theorem}[Isomorphism theorem]
\label{thm:isomorphism}
Let $P$ and $P'$ be Dedekind Poisson algebras over $F$ in the canonical
forms of Theorem~\ref{thm:main}.

For type~\textup{\rm(I)}, the algebras are isomorphic if and only if
they simultaneously have, or simultaneously do not have, the
idempotent summand $Fe$, and
\[
\dim_F Z=\dim_F Z'.
\]

For type~\textup{\rm(II)}, write
\[
P=P(E,Z,V,c,\beta,\omega),
\qquad
P'=P(E',Z',V',c',\beta',\omega').
\]
Then $P\cong P'$ if and only if
\[
E=0\Longleftrightarrow E'=0,
\qquad
\dim_F Z=\dim_F Z',
\]
and the pairs $(\beta,\omega)$ and $(\beta',\omega')$ are
simultaneously similar.
\end{theorem}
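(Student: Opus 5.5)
The proof has two directions. First we extract intrinsic invariants of a Dedekind Poisson algebra, then we build explicit isomorphisms from the data.

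For necessity, let $\Phi:P\to P'$ be a Poisson isomorphism. It preserves nilpotency of elements, so $\Phi(N)=N'$. Since $N=\ker L_e$ and $e$ is the unique non-zero idempotent by Theorem~\ref{thm:assoc-structure}, $\Phi$ maps the unique non-zero idempotent of $P$ (if any) to that of $P'$; hence $E=0$ if and only if $E'=0$. Further, $\Phi$ maps $N^2$ onto $N'^2$ and $\operatorname{Ann}(N)$ onto $\operatorname{Ann}(N')$. This already decides the type.
\begin{itemize}
\item In type~(I) we have $N=Z$, so $\dim Z=\dim N=\dim N'=\dim Z'$.
\item In type~(II) we have $\operatorname{Ann}(N)=Z\oplus Fc$ and $N^2=Fc$, so $\dim Z=\dim\operatorname{Ann}(N)/N^2$ is an invariant.
\end{itemize}
For the forms in type~(II), write $\Phi(c)=\lambda c'$ with $\lambda\in F^\times$, which is possible because $\Phi(N^2)=N'^2$. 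Let $\pi':N'\to V'$ be the projection along $\operatorname{Ann}(N')$, and define $\varphi=\pi'\circ\Phi|_V$. Then $\varphi$ is an isomorphism $V\to V'$, because $\Phi$ induces an isomorphism $N/\operatorname{Ann}(N)\to N'/\operatorname{Ann}(N')$ and $V$, $V'$ are complements identified with these quotients. For $u,v\in V$ we have $\Phi(u)=\varphi u+a$ and $\Phi(v)=\varphi v+b$ with $a,b\in\operatorname{Ann}(N')$, and these annihilator terms are Lie-central by Lemma~\ref{lem:ann-central}. Hence
\[
\lambda\beta(u,v)c'=\Phi(uv)=\Phi(u)\Phi(v)=\beta'(\varphi u,\varphi v)c',
\]
and the same computation holds for the brackets. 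This gives simultaneous similarity with scalar $\lambda$.

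For sufficiency in type~(I), take any linear bijection $Z\to Z'$ and map $e\mapsto e'$. All products and brackets are preserved trivially.

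For sufficiency in type~(II), let $(\varphi,\lambda)$ witness simultaneous similarity. Define $\Phi$ by $e\mapsto e'$, by a linear bijection $Z\to Z'$, by $v\mapsto\varphi v$ on $V$, and by $c\mapsto\lambda c'$. Checking the multiplication tables of Theorem~\ref{thm:main}, the only non-trivial relations are
\[
\Phi(uv)=\beta(u,v)\lambda c'=\beta'(\varphi u,\varphi v)c'=\Phi(u)\Phi(v),
\]
together with the analogous identity for $\omega$. All other products vanish on both sides or reduce to $e^2=e$.

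The main obstacle is the necessity step showing that $\varphi$ is well-defined and bijective while ignoring the annihilator components of $\Phi(u)$. It requires both that $\operatorname{Ann}(N)$ is intrinsic and that it is Lie-central by Lemma~\ref{lem:ann-central}, and also that $\operatorname{Ann}(N)$ annihilates $N$ associatively.
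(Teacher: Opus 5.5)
Your proposal is correct and follows essentially the same route as the paper. Both arguments rest on the same ingredients: the intrinsic nil ideal $N$ and the unique idempotent, the invariance of $N^2$ and $\operatorname{Ann}(N)$ (which gives $\dim Z=\dim\operatorname{Ann}(N)/N^2$), the scalar from $\Phi(c)=\lambda c'$, the map $V\to V'$ induced on $N/\operatorname{Ann}(N)$ (which you make explicit through the projection $\pi'$, while annihilator components drop out), and the componentwise construction for the converse.
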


\begin{proof}
Consider first type~\textup{\rm(II)}. Let
$f:P\to P'$ be a Poisson isomorphism. The existence of a non-zero
idempotent is invariant under isomorphism, and when it exists it is
unique by Lemma~\ref{lem:unique-idempotent}; hence $f(e)=e'$.
Consequently the nilpotent summand $N$ is carried onto $N'$.

The subspaces
\[
N^2=Fc
\qquad\text{and}\qquad
\operatorname{Ann}(N)=Z\oplus Fc
\]
are intrinsically determined by the associative multiplication, so
$f$ carries them onto $N'^2=Fc'$ and
$\operatorname{Ann}(N')=Z'\oplus Fc'$. Therefore
\[
\dim_FZ
 =\dim_F\bigl(\operatorname{Ann}(N)/N^2\bigr)
 =\dim_F\bigl(\operatorname{Ann}(N')/N'^2\bigr)
 =\dim_FZ'.
\]
There is a scalar $\lambda\in F^\times$ such that
\[
f(c)=\lambda c'.
\]
The map $f$ also induces a vector-space isomorphism
\[
\varphi:N/\operatorname{Ann}(N)
 \longrightarrow
N'/\operatorname{Ann}(N'),
\]
which, after using the chosen complements, is identified with an
isomorphism $\varphi:V\to V'$.

For $u,v\in V$,
\[
f(uv)=f(\beta(u,v)c)=\lambda\beta(u,v)c'.
\]
Central-annihilator components added to representatives do not affect
products, so also
\[
f(u)f(v)=\beta'(\varphi u,\varphi v)c'.
\]
Hence
\[
\beta'(\varphi u,\varphi v)=\lambda\beta(u,v).
\]
The same argument with the Lie bracket gives
\[
\omega'(\varphi u,\varphi v)=\lambda\omega(u,v).
\]
Thus the conditions are necessary.

Conversely, suppose they hold. Choose any vector-space isomorphism
$\theta:Z\to Z'$. Define
\[
f(e)=e',
\qquad
f(c)=\lambda c',
\qquad
f(z)=\theta(z),
\qquad
f(v)=\varphi(v),
\]
omitting $e$ when the idempotent summand is absent. The simultaneous
similarity identities show directly that $f$ preserves both
multiplications. Hence it is a Poisson isomorphism.

The type~\textup{\rm(I)} statement is the same argument without the
forms. The existence of the unique non-zero idempotent distinguishes
the two possibilities, and the remaining summand is a zero algebra,
classified by its vector-space dimension.
\end{proof}

\begin{remark}
Theorem~\ref{thm:isomorphism} is a structural classification up to
isomorphism over an arbitrary field. It deliberately does not attempt
to replace simultaneous similarity of $(\beta,\omega)$ by field-specific
matrix normal forms. Such normal forms would depend strongly on $F$ and
would obscure the arbitrary-field nature of the result.
\end{remark}

\section{Ground fields, mixed algebras and examples}
\label{sec:fields}

We now make the dependence on the ground field explicit.

\begin{definition}
A Poisson algebra $P$ is called \emph{mixed} if both of its
multiplications are non-zero, equivalently
\[
P^2\neq0
\qquad\text{and}\qquad
[P,P]\neq0.
\]
For $a\in F^\times$, we call $a$ a \emph{square} if $a=b^2$ for some
$b\in F^\times$, and a \emph{non-square} otherwise. In characteristic
$2$, write
\[
F^2=\{a^2:a\in F\},
\]
which is a subfield of $F$. The field $F$ is called \emph{perfect} if
the Frobenius map $x\mapsto x^2$ is surjective, equivalently if
$F^2=F$; otherwise $F$ is called \emph{imperfect}.
\end{definition}

A mixed algebra in Theorem~\ref{thm:main} must have a non-zero
alternating form $\omega$, hence $\dim_FV\geq2$.

\begin{proposition}[A characteristic-$2$ dimension bound]
\label{prop:char2-dim}
Assume $\operatorname{char}(F)=2$ and let $V$ be the active component
of a type~\textup{\rm(II)} Dedekind Poisson algebra. Then
\[
\dim_FV\leq [F:F^2].
\]
More precisely, for any basis $(v_i)_{i\in I}$ of $V$, the family
$(\beta(v_i,v_i))_{i\in I}$ is linearly independent over $F^2$.
\end{proposition}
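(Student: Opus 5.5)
The plan is to exploit the fact that in characteristic $2$ the quadratic map $q(v)=\beta(v,v)$ is additive and Frobenius-semilinear. For $u,v\in V$ and $\lambda\in F$, symmetry and bilinearity of $\beta$ give
\[
q(u+v)=q(u)+2\beta(u,v)+q(v)=q(u)+q(v),
\qquad
q(\lambda v)=\lambda^2q(v).
\]
So $q:V\to F$ is a homomorphism of additive groups satisfying $q(\lambda v)=\lambda^2 q(v)$. Equivalently, if we regard $F$ as a vector space over the subfield $F^2$, then $q$ is $F$-to-$F^2$ semilinear along the isomorphism $F\to F^2$, $\lambda\mapsto\lambda^2$ (injective since Frobenius is injective on a field).

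To prove the independence claim, fix a basis $(v_i)_{i\in I}$ of $V$ and suppose a finite relation
\[
\sum_{i\in J}\mu_i^2\,\beta(v_i,v_i)=0,\qquad \mu_i\in F,
\]
holds with $J\subseteq I$ finite. Every element of $F^2$ has the form $\mu^2$, so this is the general $F^2$-linear relation. Put $v=\sum_{i\in J}\mu_iv_i$. By the additivity and semilinearity above,
\[
\beta(v,v)=\sum_{i\in J}\mu_i^2\beta(v_i,v_i)=0.
\]
The anisotropy condition in Theorem~\ref{thm:main} (equivalently Proposition~\ref{prop:canonical-form}) then forces $v=0$. Since the $v_i$ are a basis, all $\mu_i=0$, hence all $\mu_i^2=0$. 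So the family $(\beta(v_i,v_i))_{i\in I}$ is $F^2$-linearly independent in $F$.

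The dimension bound follows at once. An $F^2$-linearly independent family in $F$ indexed by $I$ has cardinality at most $\dim_{F^2}F=[F:F^2]$, so $\dim_F V=|I|\leq[F:F^2]$. This works for infinite cardinals as well, since independent sets inject into a basis.

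I expect no real obstacle. The only care needed is to note that the cross term $2\beta(u,v)$ vanishes precisely because $\operatorname{char}(F)=2$, that $F^2$ is a subfield so the degree $[F:F^2]$ makes sense, and that the argument never uses $\omega$.
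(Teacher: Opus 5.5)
Your proposal is correct and follows essentially the same route as the paper. Both expand $\beta(v,v)=\sum_i x_i^2\beta(v_i,v_i)$ using $\operatorname{char}(F)=2$, turn an $F^2$-linear relation among the $\beta(v_i,v_i)$ into an isotropic vector $\sum_i y_iv_i$ that contradicts anisotropy, and then bound the cardinality of an independent family by $\dim_{F^2}F=[F:F^2]$.
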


\begin{proof}
Put $a_i=\beta(v_i,v_i)$. For a vector
$v=\sum_i x_iv_i$ with finite support, symmetry and
$\operatorname{char}(F)=2$ give
\[
\beta(v,v)=\sum_i a_ix_i^2.
\]
If the family $(a_i)$ were linearly dependent over $F^2$, we could
write
\[
\sum_i a_i y_i^2=0
\]
with finitely many $y_i\in F$, not all zero. Then the non-zero vector
$\sum_i y_iv_i$ would satisfy $\beta(v,v)=0$, contrary to the
condition in Theorem~\ref{thm:main}. Conversely, such a relation is
exactly an isotropic vector. Hence the family is $F^2$-linearly
independent, and its cardinality is at most
$\dim_{F^2}F=[F:F^2]$.
\end{proof}

The existence of the symmetric form $\beta$ on a space of dimension
at least $2$ now has a simple field-theoretic criterion.

\begin{theorem}[Field criterion for mixed algebras]
\label{thm:field-criterion}
Let $F$ be a field.

\begin{enumerate}
\item If $\operatorname{char}(F)\neq2$, then a mixed Dedekind Poisson
algebra over $F$ exists if and only if $F$ has a non-square.

\item If $\operatorname{char}(F)=2$, then a mixed Dedekind Poisson
algebra over $F$ exists if and only if $F$ is imperfect.
\end{enumerate}
\end{theorem}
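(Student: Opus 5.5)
By Theorem~\ref{thm:main}, a mixed Dedekind Poisson algebra is exactly an algebra of type~\textup{\rm(II)} with $\omega\neq0$. Such a non-zero alternating form forces $\dim_FV\geq2$, and conversely on any space of dimension at least $2$ we may put any non-zero alternating form, because Remark~\ref{rem:no-compatibility} imposes no compatibility with $\beta$. So in both parts the statement reduces to one question: does $F$ carry a two-dimensional space $V$ with a symmetric bilinear form $\beta$ satisfying $\beta(v,v)\neq0$ for all $0\neq v\in V$? If yes, take $Z=0$, $E=0$, and on a basis $v_1,v_2$ set $\omega(v_1,v_2)=1=-\omega(v_2,v_1)$. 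The converse part of Theorem~\ref{thm:main} then makes this a Dedekind Poisson algebra, and it is mixed since $c=v_1^2/\beta(v_1,v_1)$ and $[v_1,v_2]=c$. If no, restricting $\beta$ to any two-dimensional subspace of $V$ shows that $\dim_FV\geq2$ is impossible.

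For $\operatorname{char}(F)\neq2$, suppose $\beta$ is anisotropic on a plane. Pick $v_1$ with $\beta(v_1,v_1)=a\neq0$. Its orthogonal complement in the plane, $\{w:\beta(v_1,w)=0\}$, is non-zero because it is the kernel of a linear functional on a two-dimensional space. Take $v_2$ there; since $v_2\neq0$, $\beta(v_2,v_2)=b\neq0$, and $v_1,v_2$ form a basis. Then $\beta(xv_1+yv_2,xv_1+yv_2)=ax^2+by^2$. If every element of $F^\times$ were a square, then $-b/a=t^2$ for some $t$, and the vector $tv_1+v_2$ would be isotropic, a contradiction. Conversely, if $d$ is a non-square, the diagonal form $x^2-dy^2$ is anisotropic: $x^2=dy^2$ with $y\neq0$ makes $d$ a square, and $y=0$ forces $x=0$.

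For $\operatorname{char}(F)=2$, Proposition~\ref{prop:char2-dim} gives $\dim_FV\leq[F:F^2]$. A perfect field has $[F:F^2]=1$, so no plane works and mixed algebras cannot exist. If $F$ is imperfect, choose $d\notin F^2$ and use the diagonal form $\beta(v_1,v_1)=1$, $\beta(v_2,v_2)=d$, $\beta(v_1,v_2)=0$. Then $\beta(v,v)=x^2+dy^2$, and $x^2+dy^2=0$ with $y\neq0$ gives $d=(x/y)^2\in F^2$ (signs do not matter in characteristic $2$), while $y=0$ forces $x=0$. So this form is anisotropic.

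The only step requiring any care is the diagonalization in the characteristic-$\neq2$ case, where we need the orthogonal complement of $v_1$ to supply a second basis vector. That is handled by the kernel argument above together with anisotropy. Everything else is a direct application of Theorem~\ref{thm:main}, Remark~\ref{rem:no-compatibility} and Proposition~\ref{prop:char2-dim}.
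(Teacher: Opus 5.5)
Your proposal is correct and follows the paper's proof. Both reduce via Theorem~\ref{thm:main} to the existence of a plane carrying a symmetric form with $\beta(v,v)\neq0$ for all $v\neq0$, and both use the diagonal examples $x^2-dy^2$ and $x^2+dy^2$. The differences are local: for $\operatorname{char}(F)\neq2$ you diagonalize using the $\beta$-orthogonal complement of $v_1$ instead of the discriminant of $at^2+2bt+c$ (this step does not depend on the characteristic and would also settle the perfect characteristic-$2$ case), and in characteristic $2$ you cite Proposition~\ref{prop:char2-dim} instead of redoing its two-vector computation.
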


\begin{proof}
Suppose first that $\operatorname{char}(F)\neq2$ and a mixed Dedekind
Poisson algebra exists. By Theorem~\ref{thm:main}, there is a subspace
$V$ of dimension at least $2$ carrying a symmetric bilinear form
$\beta$ such that $\beta(v,v)\neq0$ for every $0\neq v\in V$.
Choose linearly independent $u,v\in V$ and write
\[
a=\beta(u,u),
\qquad
b=\beta(u,v),
\qquad
c=\beta(v,v).
\]
Then $a\neq0$ and
\[
\beta(tu+v,tu+v)=at^2+2bt+c
\]
never vanishes. If every non-zero element of $F$ were a square, the
discriminant
\[
4(b^2-ac)
\]
would either be zero or a square, and the quadratic polynomial would
have a root in $F$, a contradiction. Hence $F$ has a non-square.

Conversely, let $d\in F^\times$ be a non-square. On
$V=Fu\oplus Fv$, define a symmetric form by
\[
\beta(u,u)=1,
\qquad
\beta(v,v)=-d,
\qquad
\beta(u,v)=0.
\]
Then
\[
\beta(xu+yv,xu+yv)=x^2-dy^2,
\]
which is zero only for $x=y=0$. Let $P=V\oplus Fc$ and define
\[
uv=\beta(u,v)c
\]
using this form, and choose the alternating form $\omega$ with
\[
\omega(u,v)=1.
\]
Theorem~\ref{thm:main} gives a mixed Dedekind Poisson algebra.

Now let $\operatorname{char}(F)=2$. Suppose first that $F$ is perfect
and $V$ has dimension at least $2$. For independent $u,v\in V$, put
$a=\beta(u,u)$ and $c=\beta(v,v)$. Both are non-zero, and symmetry gives
\[
\beta(xu+yv,xu+yv)=ax^2+cy^2
\]
because the cross term is $2xy\beta(u,v)=0$. Since $F$ is perfect,
$c/a=s^2$ for some $s\in F$. Then
\[
\beta(su+v,su+v)=as^2+c=c+c=0,
\]
a contradiction. Thus $\dim_FV\leq1$, and every alternating form on
$V$ is zero. Hence no mixed Dedekind Poisson algebra exists.

Conversely, if $F$ is imperfect, choose $d\in F\setminus F^2$ and let
$V=Fu\oplus Fv$. Define
\[
\beta(u,u)=1,
\qquad
\beta(v,v)=d,
\qquad
\beta(u,v)=0.
\]
Then
\[
\beta(xu+yv,xu+yv)=x^2+dy^2.
\]
If this were zero with $y\neq0$, we would have
$d=(x/y)^2$, contrary to the choice of $d$; if $y=0$, then $x=0$.
Again choose a non-zero alternating form $\omega$ by
$\omega(u,v)=1$. Theorem~\ref{thm:main} yields a mixed Dedekind
Poisson algebra.
\end{proof}

\begin{corollary}
\label{cor:algclosed}
If $F$ is algebraically closed, then every Dedekind Poisson algebra over
$F$ has zero Lie multiplication. Moreover, in the canonical form of
Theorem~\ref{thm:main},
\[
\dim_FV\leq1.
\]
\end{corollary}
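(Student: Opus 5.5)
We derive the corollary from Theorem~\ref{thm:main}, splitting on the characteristic of $F$. First we show $\dim_F V\leq1$ for any type~\textup{\rm(II)} Dedekind Poisson algebra over an algebraically closed field. The vanishing of the Lie multiplication then follows directly. In type~\textup{\rm(I)} all brackets are zero by definition. In type~\textup{\rm(II)} the only possibly non-zero brackets are $[u,v]=\omega(u,v)c$ with $u,v\in V$. Since $\omega$ is alternating and $\dim_FV\leq1$, we get $\omega=0$. Equivalently, once $\dim_FV\leq1$ is known, the algebra cannot be mixed.

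\emph{Characteristic $2$.} An algebraically closed field is perfect, since every $a\in F$ has a square root. So $F^2=F$ and $[F:F^2]=1$. Proposition~\ref{prop:char2-dim} then gives $\dim_FV\leq1$ at once.

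\emph{Characteristic $\neq2$.} An algebraically closed field has no non-squares. By Theorem~\ref{thm:field-criterion}(1), no mixed Dedekind Poisson algebra exists, so the Lie multiplication is zero. This alone does not yet bound $\dim_FV$, because a non-mixed type~\textup{\rm(II)} algebra could still have $\omega=0$ and large $V$. To get the bound we reuse the first half of the proof of Theorem~\ref{thm:field-criterion}. Suppose $u,v\in V$ are linearly independent. The polynomial
\[
t\mapsto\beta(tu+v,tu+v)=at^2+2bt+c
\]
has $a=\beta(u,u)\neq0$, so it has degree $2$. Over an algebraically closed field it has a root $t_0$. The vector $t_0u+v$ is non-zero, so $\beta(t_0u+v,t_0u+v)=0$ contradicts the anisotropy condition of Theorem~\ref{thm:main}. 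Hence $\dim_FV\leq1$.

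The only step needing any care is this last one. The statement of Theorem~\ref{thm:field-criterion} concerns only mixed algebras, whereas the dimension bound must hold even when $\omega=0$. We therefore argue through the anisotropic form $\beta$ directly rather than quoting that theorem.
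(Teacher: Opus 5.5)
Your proof is correct and follows essentially the paper's route: the paper also reduces the corollary to the fact that an anisotropic $\beta$ cannot exist on a space of dimension at least $2$, reusing the argument from the proof of Theorem~\ref{thm:field-criterion} in both characteristics, and then gets $\omega=0$ by alternation. Your deviations are cosmetic. In characteristic $2$ you cite Proposition~\ref{prop:char2-dim} with $[F:F^2]=1$. In characteristic $\neq2$ you use a root of the quadratic directly instead of the discriminant. Your extra appeal to Theorem~\ref{thm:field-criterion}(1) there is redundant, since the bound $\dim_FV\leq1$ already forces the bracket to vanish.
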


\begin{proof}
If $\operatorname{char}(F)\neq2$, every non-zero element of an
algebraically closed field is a square. If $\operatorname{char}(F)=2$,
an algebraically closed field is perfect. The proof of
Theorem~\ref{thm:field-criterion} shows in either case that an
anisotropic $\beta$ cannot exist on a space of dimension at least $2$.
Thus $\dim_FV\leq1$, and every alternating form on $V$ is zero.
\end{proof}

\begin{corollary}
\label{cor:perfect-char2}
Let $\operatorname{char}(F)=2$ and suppose that $F$ is perfect. Then
for every Dedekind Poisson algebra $P$ over $F$,
\[
[P,P]=0.
\]
In particular this holds over every finite field of characteristic
$2$.
\end{corollary}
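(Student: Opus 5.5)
The plan is to read the statement off the main classification, Theorem~\ref{thm:main}, together with the characteristic-$2$ dimension bound, Proposition~\ref{prop:char2-dim}. The argument splits according to the type of $P$.

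If $P$ is of type~\textup{\rm(I)}, then all Lie brackets vanish by definition, so $[P,P]=0$ and nothing more is needed.

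Suppose instead that $P$ is of type~\textup{\rm(II)}. Theorem~\ref{thm:main} gives $[P,P]=[V,V]=\omega(V,V)\,c$, since brackets involving $e$, $Z$ or $c$ vanish. So it suffices to show that $\omega=0$, and for this I will show that $\dim_FV\leq1$. Perfectness means $F^2=F$, so $[F:F^2]=1$. Proposition~\ref{prop:char2-dim} then bounds $\dim_FV\leq1$. Alternatively, one can quote the perfect-field case in the proof of Theorem~\ref{thm:field-criterion}. Take any two linearly independent vectors $u,v$. In characteristic $2$ the cross term drops out, so $\beta(xu+yv,xu+yv)=ax^2+cy^2$. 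Since $c/a$ is a square $s^2$, this value vanishes at $(x,y)=(s,1)$, contradicting anisotropy. On a space of dimension at most $1$, every alternating form is zero, because $\omega(v,v)=0$ and bilinearity give $\omega(\lambda v,\mu v)=\lambda\mu\,\omega(v,v)=0$. Hence $[P,P]=0$.

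For the final sentence of the statement, I will recall that a finite field of characteristic $2$ is perfect. The Frobenius map $x\mapsto x^2$ is an injective ring endomorphism, since its kernel is trivial in a field. An injective map from a finite set to itself is surjective, so $F^2=F$.

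No step here is a real obstacle. The only point needing care is that Proposition~\ref{prop:char2-dim} is stated via a basis and an $F^2$-independence claim, which is vacuous as an inequality when $V$ is infinite-dimensional. However, the bound $[F:F^2]=1$ still forbids two $F^2$-independent elements, so the argument covers arbitrary $V$ without any finite-dimensionality assumption.
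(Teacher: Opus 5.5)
Your proposal is correct and matches the paper's proof: type~(I) has zero bracket by definition, and in type~(II) Proposition~\ref{prop:char2-dim} with $[F:F^2]=1$ forces $\dim_FV\leq1$, so the alternating form $\omega$ vanishes. Your caveat about infinite-dimensional $V$ is unnecessary, because that proposition is a cardinal inequality: for any basis the family $(\beta(v_i,v_i))$ is $F^2$-independent, so it gives $\dim_FV\leq1$ directly.
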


\begin{proof}
Since $F=F^2$, Proposition~\ref{prop:char2-dim} gives
$\dim_FV\leq1$ in type~\textup{\rm(II)}. Every alternating form on
such a space is zero, and type~\textup{\rm(I)} already has zero Lie
multiplication.
\end{proof}

\begin{corollary}
\label{cor:finite-fields}
Let $F=\mathbb F_q$ be a finite field. If $q$ is even, every Dedekind
Poisson algebra over $F$ has zero Lie multiplication. If $q$ is odd,
mixed Dedekind Poisson algebras exist.
\end{corollary}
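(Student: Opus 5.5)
The corollary follows by combining the previous two results with the elementary structure of finite fields. I would split according to the parity of $q$.

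\emph{The case $q$ even.} Here $\operatorname{char}(F)=2$. The Frobenius map $x\mapsto x^2$ is an injective ring endomorphism of $F$, since $x^2=0$ forces $x=0$. Because $F$ is finite, an injective map $F\to F$ is also surjective. Hence $F^2=F$, so $F$ is perfect. Corollary~\ref{cor:perfect-char2} then gives $[P,P]=0$ for every Dedekind Poisson algebra $P$ over $F$. Equivalently, one can cite the second part of Theorem~\ref{thm:field-criterion}.

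\emph{The case $q$ odd.} Here $\operatorname{char}(F)\neq2$. By the first part of Theorem~\ref{thm:field-criterion}, it suffices to show that $F$ contains a non-square. Consider the squaring map on $F^\times$. It is a group homomorphism, and its kernel is $\{\pm1\}$, which has order $2$ because $1\neq-1$ in odd characteristic. Its image therefore has order $(q-1)/2<q-1$, so some $d\in F^\times$ is not a square.

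For concreteness, one can also write down the example explicitly, following the construction in the proof of Theorem~\ref{thm:field-criterion}. Take $V=Fu\oplus Fv$ with $\beta(u,u)=1$, $\beta(v,v)=-d$, $\beta(u,v)=0$, and $\omega(u,v)=1$. Set $P=V\oplus Fc$. Theorem~\ref{thm:main} shows that $P$ is a mixed Dedekind Poisson algebra.

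There is no genuine obstacle here. The only points needing justification are the two standard facts about finite fields: Frobenius is surjective, and $F^\times$ is not entirely squares when $q$ is odd. Both follow from the finiteness and counting arguments given above.
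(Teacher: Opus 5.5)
Your proof is correct and follows the paper's argument. For even $q$, the field is perfect, so Corollary~\ref{cor:perfect-char2} (equivalently Theorem~\ref{thm:field-criterion}(2)) gives zero Lie multiplication; for odd $q$, a non-square exists, so Theorem~\ref{thm:field-criterion}(1) applies. You simply supply the standard justifications the paper leaves implicit: injectivity of Frobenius on a finite set, and a kernel count for the squaring map in place of the paper's appeal to cyclicity of $\mathbb F_q^\times$.
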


\begin{proof}
Finite fields are perfect. If $q$ is odd, the cyclic group
$\mathbb F_q^\times$ has non-squares, so
Theorem~\ref{thm:field-criterion} applies.
\end{proof}

We conclude with explicit mixed examples. They also illustrate why the
arbitrary-field formulation is useful.

\begin{example}[The real and rational fields]
\label{ex:RQ}
Let $F=\mathbb R$ or $F=\mathbb Q$ and let
\[
P=Fa\oplus Fb\oplus Fc.
\]
Define
\[
a^2=c,
\qquad
b^2=c,
\qquad
ab=0,
\]
and
\[
[a,b]=c.
\]
All other basic products and brackets are zero. For
$x=\alpha a+\beta b$,
\[
x^2=(\alpha^2+\beta^2)c.
\]
Over both $\mathbb R$ and $\mathbb Q$, this is zero only when
$\alpha=\beta=0$. Hence Theorem~\ref{thm:main} shows that $P$ is a mixed
Dedekind Poisson algebra with
\[
P^2=[P,P]=Fc.
\]
\end{example}

\begin{remark}[Infinite-dimensional mixed examples]
The real and rational constructions extend to arbitrary cardinal
dimension.  If $I$ is any index set with at least two elements, take
\[
V=\bigoplus_{i\in I}Fv_i,
\qquad F=\mathbb R\ \text{or}\ \mathbb Q,
\]
and define $\beta(v_i,v_j)=\delta_{ij}$.  Every non-zero vector has
finite support, so $\beta(v,v)$ is a non-zero sum of squares.  Choosing
an alternating form $\omega$ which is non-zero on one pair of basis
vectors gives a mixed Dedekind Poisson algebra $V\oplus Fc$ of the
corresponding dimension.  Thus the arbitrary-dimensional statement is
realized by explicit examples and is not merely formal.
\end{remark}

\begin{example}[Finite fields of odd order]
\label{ex:finite-odd}
Let $F=\mathbb F_q$ with $q$ odd and choose a non-square
$d\in F^\times$. Let
\[
P=Fa\oplus Fb\oplus Fc
\]
with
\[
a^2=c,
\qquad
b^2=-dc,
\qquad
ab=0,
\qquad
[a,b]=c.
\]
Then
\[
(\alpha a+\beta b)^2=(\alpha^2-d\beta^2)c.
\]
The coefficient can vanish for a non-zero pair $(\alpha,\beta)$ only
if $d$ is a square. Thus $P$ is a mixed Dedekind Poisson algebra.
For example, over $\mathbb F_3$ one may take $d=2$, which gives
$a^2=b^2=c$.
\end{example}

\begin{example}[An imperfect field of characteristic $2$]
\label{ex:char2}
Let
\[
F=\mathbb F_2(t)
\]
and
\[
P=Fu\oplus Fv\oplus Fc.
\]
Define
\[
u^2=c,
\qquad
v^2=tc,
\qquad
uv=0,
\]
and
\[
[u,v]=c.
\]
The bracket is alternating; in characteristic $2$ this also gives
$[v,u]=c$. For $x=\alpha u+\beta v$,
\[
x^2=(\alpha^2+t\beta^2)c.
\]
If this coefficient vanished for $\beta\neq0$, then
$t=(\alpha/\beta)^2$, impossible in $\mathbb F_2(t)$. Hence the
condition of Theorem~\ref{thm:main} is satisfied and $P$ is mixed.
This shows that characteristic $2$ does not create an additional
structural type; rather, it changes which symmetric forms, and hence
which mixed algebras, can exist over the ground field.
\end{example}

\begin{example}[Characteristic $2$: the off-diagonal part matters]
\label{ex:char2-offdiag}
Let $F=\mathbb F_2(t)$ and, for $s\in F$, let
\[
P_s=Fu\oplus Fv\oplus Fc
\]
with
\[
u^2=c,
\qquad
v^2=tc,
\qquad
uv=sc,
\qquad
[u,v]=c,
\]
and all products and brackets involving $c$ equal to zero.  The
symmetric form on $Fu\oplus Fv$ has matrix
\[
B_s=\begin{pmatrix}1&s\\ s&t\end{pmatrix},
\]
while the alternating form has matrix
\[
W=\begin{pmatrix}0&1\\ 1&0\end{pmatrix}.
\]
For $x=\alpha u+\gamma v$ one has
\[
x^2=(\alpha^2+t\gamma^2)c,
\]
so the condition $x^2\neq0$ for $x\neq0$ is independent of $s$ and
each $P_s$ is Dedekind.  Nevertheless the off-diagonal parameter is
part of the Poisson isomorphism data.  Indeed, under simultaneous
similarity of the two forms the ratio
\[
\frac{\det B_s}{\det W}=t+s^2
\]
is invariant.  Since the Frobenius map is injective on every field,
distinct values of $s$ give distinct values of this invariant.  Here
$[F:F^2]=2$, so Proposition~\ref{prop:char2-dim} is attained sharply.
This illustrates a specifically characteristic-$2$
feature: the diagonal map $v\mapsto\beta(v,v)$ does not determine the
symmetric bilinear form, and the full pair $(\beta,\omega)$ must be
retained.
\end{example}

\begin{remark}
The role of the associative square in the present classification is
parallel to the role played by the Leibniz square $[x,x]$ in the
classification of Leibniz algebras whose subalgebras are ideals in
\cite{KurdachenkoSemkoSubbotin2017}. In a Poisson algebra the Lie
multiplication is alternating, so $[x,x]=0$ identically; the element
$x^2$ from the commutative associative multiplication becomes the
mechanism which forces the common one-dimensional central part. This
is the source of the condition $\beta(v,v)\neq0$ and of the close
appearance of bilinear forms in the two theories.
\end{remark}

\end{document}